\newcommand{\RR}{\mathbb{R}}
\newcommand{\p}{\rm {I\kern-1pt P}}
\newcommand{\EQ}{\begin{equation}\begin{array}{lllllllll}}
\newcommand{\EE}{\end{array}\end{equation}}
\newcommand{\MT}{\left[ \begin{array}{ccccccccc}}
\newcommand{\EM}{\end{array}\right]}
\newenvironment{example}{{\it Example. }}{\hfill $\triangle$ \medskip}
\newcommand{\LM}{\left[\begin{array}{ccccccccc}}
\newcommand{\RM}{\end{array}\right]}
\newcommand{\LA}{\left\{ \begin{array}{ccccccccc}}
\newcommand{\RA}{\end{array}\right.}
\newcommand{\RAA}{\end{array}\right\}}
\newenvironment{remarks}{{\it Remarks.}}{\hfill $\triangleleft$ \smallskip}
\newcommand{\pfrac}[2]{\frac{\partial#1}{\partial#2}}
\newcommand{\bbone}{\mathbf{1}}
\newcommand{\bPsi}{\boldsymbol{\Psi}}
\newcommand{\bPhi}{\boldsymbol{\Phi}}
\newcommand{\bc}{\mathbf{c}}
\newcommand{\bC}{\mathbf{C}}
\newcommand{\bk}{\mathbf{k}}
\newcommand{\cH}{\mathcal{H}}
\newcommand{\cF}{\mathcal{F}}
\newcommand{\cX}{\mathcal{X}}
\newcommand{\tr}{\top\!}
\newcommand{\scal}[2]{\left\langle{#1},{#2}\right\rangle}
\newcommand{\nor}[1]{\left\|{#1}\right\|}
\def\[{\begin{equation}}
\def\]{\end{equation}}
\def\NN{\mbox{I}\!\mbox{N}}
\def\[{\begin{equation}}
\def\]{\end{equation}}
\newcommand{\eq}{\begin{equation}\begin{array}{lclllllllllllllll}}
\newcommand{\ee}{\end{array}\end{equation}}
\newcommand{\bmt}{\left[ \begin{array}{ccccccccc}}
\newcommand{\emt}{\end{array}\right]}
\newcommand{\bea}{\begin{eqnarray}}
\newcommand{\eea}{\end{eqnarray}}
\newcommand{\bean}{\begin{eqnarray*}}
\newcommand{\eean}{\end{eqnarray*}}
\date{}
\title{\bf Kernel Methods for the \\ Approximation of Nonlinear Systems}
\author{Jake Bouvrie\footnotemark[2]\ \footnotemark[5]
\and Boumediene Hamzi\footnotemark[3]\ \footnotemark[5]
 }
\begin{document}
\maketitle

\renewcommand{\thefootnote}{\fnsymbol{footnote}}

\footnotetext[2]{Laboratory for Computational and Statistical Learning, Massachusetts Institute of Technology, Cambridge, MA 02138, USA. (jvb@csail.mit.edu)
}
\footnotetext[3]{Department of Mathematics, Imperial College London, London SW7 2AZ, UK.  (b.hamzi@imperial.ac.uk)}
\footnotetext[5]{Parts of this work were done while both authors were at Department of Mathematics, Duke University, Durham, NC 27708, USA.}
 

\renewcommand{\thefootnote}{\arabic{footnote}}



\maketitle
\begin{abstract}
We introduce a data-driven model approximation method for nonlinear control systems, drawing
on recent progress in machine learning and statistical dimensionality reduction.
The method is based on embedding the nonlinear system in a high (or infinite) dimensional reproducing kernel Hilbert space (RKHS) where linear balanced truncation
may be carried out implicitly. This leads to a nonlinear reduction map which can be combined
with a representation of the system belonging to a RKHS to give a closed, reduced order dynamical system
which captures the essential input-output characteristics of the original model.
Working in RKHS
provides a convenient, general functional-analytical framework for theoretical understanding.
Empirical simulations illustrating the approach are also provided.
\end{abstract}

\section{Introduction}

Data-based modelling of nonlinear dynamical systems
has been addressed by many authors. For example,  several methods have
been developed in Time Series Analysis (\cite{kantz} for example) and System Identification (\cite{ljung}, \cite{box}  for
example). Coifman et al. discuss data-based modelling of a stochastic
Langevin system \cite{Mauro08}. Archambeau et al. \cite{archambeau} proposed methods to
approximate SDEs from data. Smale and Zhou  use kernel methods to
approximate a hyperbolic dynamical system \cite{hyperbolic}. 

In this paper we propose a scheme for the approximation of nonlinear systems using balanced model-order reduction. A key, and to our knowledge, novel point of departure from the literature
on nonlinear model reduction is that our approach marries approximation and dimensionality reduction methods known to the machine learning and statistics communities with existing ideas in linear and nonlinear control. In particular, we apply a method similar to kernel PCA (Principal Component Analysis) as well as function approximation in Reproducing Kernel Hilbert Spaces (RKHSes) to the problem of balanced model reduction. Working in RKHSes
provides a convenient, general functional-analytical framework for theoretical understanding
as well as a ready source of existing results and error estimates. The approach presented here is also strongly empirical, in that observability and controllability, and in some cases the dynamics of the nonlinear system are estimated from simulated or measured trajectories. 

The approach we propose begins by  viewing the controllability and observability energies for nonlinear systems as Gramians in a high (or possibly infinite) dimensional RKHS. These Gramians are approximated empirically and then simultaneously
diagonalized in order to identify directions which, in the RKHS, are both the most
observable and the most controllable. 

The assumption that it is possible to apply the method of linear balancing to a nonlinear system when lifted to a RKHS is far more reasonable than applying the linear theory in the original space hoping for the best. Working in the high dimensional RKHS allows one to
perform linear operations on a representation of the system's state and output which can capture strong nonlinearities. Moreover, working in an RKHS readily include linear spaces\footnote{When using the embedding $\Phi: x \mapsto x$ and kernel $k(x,y)=\langle x,y \rangle$, cf. Appendix A for meaning of these objects.} and, therefore, our approach not only covers existing Linear Theory but also extends the range of available spaces where it is reasonable to assume that one will obtain better results when dealing with a nonlinear problem. Therefore, a system for which existing model reduction methods fail, may be approximated by a lower dimensional system when mapped into a RKHS. This situation closely parallels the problem of linear separability in data classification: A dataset which is not linearly separable might be easily
separated when mapped into a nonlinear feature space. The decision boundary is linear in this feature space, but is nonlinear in the original data space. Essentially, we are proposing to apply linear methods to nonlinear systems once mapped into a high (possibly infinite-dimensional) Hilbert space\footnote{Let us note here that it is important to choose the right RKHS in order to perform this sort of computations. In our work, we used existing Universal Kernels such as the Gaussian or polynomial kernels (cf. Proposition \ref{prop1}) but, in general, properly choosing the right RKHS is an open problem even in classical Learning Theory. }. 

Nonlinear reduction of the state space already allows to the design of simpler controllers,
but is only half of the picture. One would also like to be able to write a closed, reduced dynamical
system whose input-output behavior closely captures that of the original system. This problem is the
focus of the second half of our paper, where we again exploit helpful properties of RKHS in order to provide such a closed system.

The paper is organized as follows. In the next section we provide the relevant background for
model reduction and balancing for linear and nonlinear control systems. We then adapt and extend balancing techniques described in the background section to the current RKHS setting in Section~\ref{sec:empirical_gramians}. Section~\ref{sec:closed_sys} then
proposes a method for determining a closed, reduced nonlinear control system in light of the reduction
map described in~Section~\ref{sec:empirical_gramians}.
Finally,
 Section~\ref{sec:expts} provides experiments illustrating an application of the proposed methods to
some nonlinear systems where the method of linear balancing does not apply in $\RR^n$ since the systems we simulated are not linearly controllable and the origin is not asymptotically stable but the same method of linear balancing applies to the nonlinear system after being lifter to a RKHS. Appendix \ref{sec:defins} contains a review of Learning Theory, Appendix B contains a description of kernel PCA.

 Preliminary results of this work can be found in \cite{allerton, acc2012}.

\section{Background}
Several approaches have been proposed for the reduction of
linear control systems in view of control, but few
exist for finite or infinite-dimensional 
nonlinear control systems. For linear systems, the pioneering ``Input-
Output balancing'' approach proposed by B.C. Moore \cite{moore} observes that the important states are the
ones that are both easy to reach and that generate a lot of energy at the output. If a large amount of energy is required to reach a certain state but the same state yields a small output energy, the state is unimportant for the input-output behavior of the system.
The goal is then to find the states that are \emph{both} the most controllable and the most observable. One way to determine such states is to find a change of coordinates where the controllability and observability Gramians (which can be viewed as a measure of the controllability and the observability of the system) are equal and diagonal. States that are
difficult to reach and that don't significantly affect the output are then ignored or truncated. A system expressed in the coordinates where each state is equally controllable and observable is called its \emph{balanced realization}.

A proposal for generalizing this approach to nonlinear control systems was advanced by J. Scherpen~\cite{scherpen_thesis}, where suitably defined controllability and observability energy functions reduce to Gramians in the linear case. In general, to find the balanced realization of a system one needs to solve a set of Hamilton-Jacobi and Lyapunov equations (as we will discuss below). Moore~\cite{moore} proposed an alternative, data-based approach for balancing in the linear case. This method uses samples of the impulse response of a linear system to construct empirical controllability and observability Gramians which are then balanced and truncated using Principal Components Analysis (PCA, or Proper Orthogonal Decomposition (POD) \cite{jolliffe}). This data-driven strategy was then extended to nonlinear control systems with a stable linear approximation by Lall et al.~\cite{lall}, by effectively applying Moore's method to a nonlinear system by way of the Galerkin projection. Despite the fact that the balancing theory underpinning their approach assumes a linear system, Lall and colleagues were able to effectively reduce some nonlinear systems.

Phillips et al.~\cite{Phillips}  has also studied reduction of nonlinear circuit models
in the case of linear but unbalanced coordinate transformations and found
that approximation using a polynomial RKHS
could offer computational advantages. Gray and Verriest mention
in~\cite{gray} that studying algebraically defined Gramian operators
in RKHS may provide advantageous approximation properties, though the idea
is not further explored. Finally, Coifman et al.~\cite{Mauro08} discuss
 reduction of an uncontrolled stochastic Langevin system. There, eigenfunctions of a
combinatorial Laplacian, built from samples of trajectories, provide a
set of reduction coordinates but does not provide a reduced system. This method is
related to kernel principal components (KPCA) using a Gaussian kernel, however
reduction in this study is carried out on a simplified linear
system outside the context of control.

In the following sections we review  balancing of linear and nonlinear systems as introduced in~\cite{moore} and~\cite{scherpen_thesis}. See also \cite{scherpen_survey} for a good survey on balancing for linear and nonlinear systems.

\subsection{Balancing of Linear Systems}
Consider a linear control system \[\label{linsys}\begin{array}{rcl}\dot{x}&=&Ax+Bu,\\y&=&Cx, \end{array}, \]
where $(A,B)$ is controllable, $(A,C)$ is observable and $A$ is Hurwitz. We define the controllability and the observability Gramians as, respectively,
\[ \label{gramians}\begin{array}{rcl}
W_c=\int_0^{\infty}e^{At}BB^{\tr}e^{A^{\tr}t}\, dt,\quad \mbox{and} \quad W_o=\int_0^{\infty}e^{A^{\tr}t}C^{\tr}Ce^{At}\, dt.
\end{array}\nonumber\]
These two matrices can be viewed as a measure of the controllability and the observability of the system~\cite{moore}. For instance, consider the past energy~\cite{scherpen_thesis, scherpen_balancing}, $L_c(x_0)$, defined
as the minimal energy required to reach $x_0$ from $0$ in infinite time
\[\label{L_c}
L_c(x_0)=\inf_{\substack{
u \in { L}_2(-\infty,0),\\ x(-\infty)=0, x(0)=x_0}}
\frac{1}{2}\int_{-\infty}^0||u(t)||^2\,dt,
\]
 and  the future energy~\cite{scherpen_thesis, scherpen_balancing}, $L_o(x_0)$, defined as the output energy generated
by releasing the system from its initial state $x(t_0)=x_0$, and zero input $u(t)=0$ for $t\ge0$, i.e.
 \[\label{L_o}
 L_o(x_0)=\frac{1}{2}\int_{0}^{\infty}||y(t)||^2\,dt,
 \]
for $x(t_0)=x_0$ and $u(t)=0, t\ge0$.
In the linear case, it can be shown that \[\label{lin_energies} L_c(x_0)=\tfrac{1}{2}x_0^{\tr}W_c^{-1}x_0, \quad \mbox{and} \quad L_o(x_0)=\tfrac{1}{2}x_0^{\tr}W_o x_0.\] The columns of $W_c$ span the controllable subspace while the nullspace of $W_o$ coincides with the unobservable subspace. As such, $W_c$ and $W_o$ (or their estimates) are the key ingredients in many model reduction techniques.
It is also well known that $W_c$ and $W_o$ satisfy the Lyapunov equations~\cite{moore}
\[\label{gramians_lyap}
\begin{array}{rcl}
AW_c+W_cA^{\tr}=-BB^{\tr},\quad A^{\tr}W_o+W_oA=-C^{\tr}C.
\end{array}
\]
Several methods have been developed to solve these equations (see ~\cite{hammarling,laub,li} for example).
As mentioned at the beginning of this section, it is also possible to estimate the Gramians from empirical data, cf. (\ref{wc_empirical}) and (\ref{wo_empirical}) below.

The idea behind balancing is to find a representation where the system's observable and controllable subspaces are aligned so that reduction, if possible, consists of eliminating the states that are least controllable and which are also the least observable. More formally, we would like to find a new coordinate system such that $\Sigma:=W_c=W_o=\mbox{diag}\{\sigma_1,\cdots,\sigma_n\}$
where $\sigma_1 \ge \sigma_2 \ge \cdots \ge \sigma_n > 0$.

\begin{theorem}\cite{dullerud}
If $(A,B)$ is controllable and $(A,C)$ is observable, then the eigenvalues of $W_oW_c$ are similarity invariants, i.e. they are independent of the choice of the state-space representation of (\ref{linsys}).
Moreover, there exists a state-space representation where
\[\Sigma:=W_c=W_o=\mbox{diag}\{\sigma_1,\cdots,\sigma_n\}\]
with $\sigma_1\ge \sigma_2\ge\cdots \ge \sigma_n > 0 $ are the square roots of the eigenvalues of $W_oW_c$. Such representations are called
balanced, and the system is in balanced form. The $\sigma_i$'s, $i = 1, ... ,n$ are called the Hankel singular
values.  
The state space expressed in the transformed coordinates $(QAQ^{-1},QB,CQ^{-1})$ is
balanced and $QW_cQ^{\tr}=Q^{-{\tr}}W_oQ^{-1}=\Sigma$ where $Q \in \RR^{n \times n}$. 
\end{theorem}

The Hankel singular values,  $\sigma_i|_{i = 1}^n$, are the square roots of the eigenvalues of $W_oW_c$ and are the singular values of the Hankel operator \[\label{hankel_lin} \mathbb H =\Psi_o\circ  \Psi_c\] that characterizes the input-output behaviour of the system (\ref{linsys}) with $\Psi_c$, the controllability operator, which maps $u \in L_2(-\infty,0]$ to $x(0)$ and $\Psi_o$, the observability operator, which maps $x(0)$ to $y(t)$, $t\ge0$ with no input applied for $t \ge 0$ \cite{dullerud}. More precisely,
\[\label{psi_c} \begin{array}{rcl}\Psi_c:  L_2(-\infty,0]  &\rightarrow&  \mathbb{C}^n\\ u &\mapsto & \int_{-\infty}^0 e^{-A \tau}Bu(\tau) d\tau  \end{array}  \]
and
 \[\label{psi_o} \begin{array}{rcl}\Psi_o: \mathbb{C}^n &\rightarrow& L_2[0,\infty)\\ x(0)=x_0 &\mapsto & \left\{\begin{array}{l}Ce^{At}x_0, \;\mbox{for} \; t \ge 0,\\ 0, \; \mbox{otherwise}. \end{array} \right.  \end{array}  \]
Clearly, $x_0=\Psi_c u(t)$ for $u(t) \in L_2(-\infty,0]$ is the system state at $t=0$ due to the past input and $y(t)=\Psi_0x_0$, $t \ge 0$ is the future output due to the initial state $x_0$ with the input set to zero.
In fact, $\mathbb H$ characterizes the system's future output $y(t)=\mathbb H u(t), \; t\ge0$ based on the past input $u(t), t \le 0$. More precisely, if $x(-\infty)=0$, \[ \mathbb H u(t)=\Psi_o\Psi_c u(t)= \int_{-\infty}^0Ce^{A(t-\tau)}Bu(\tau)d\tau, \quad \mbox{for} \; t\ge 0\]


When $\mathbb H$ is known to be a compact operator, then its adjoint operator $\mathbb H^{\ast}$ is also compact and the composition $\mathbb H^{\ast} \mathbb  H$ is a self-adjoint compact operator with the spectral decomposition
\[\label{hankel_lin1} {\mathbb H^{\ast} \mathbb H}=\sum_{i=1}^{\infty} \sigma_i^2 \langle \cdot, \Psi_i \rangle_{L_2} \Psi_i, \quad \sigma_i \ge 0, \] 
\[\label{hankel_lin2} \langle \Psi_i, \Psi_j\rangle_{L_2}=\delta_{ij}, \quad \langle \Psi_i, ({\mathbb H^{\ast} \mathbb H})(\Psi_i) \rangle_{L_2}=\sigma_i^2 \]
where $\sigma_i^2$ is an eigenvalue of ${\mathbb H^{\ast} \mathbb H}$ with the corresponding eigenvector $\Psi_i$, ordered as $\sigma_1 \ge \cdots \ge \sigma_n>0$ and $\sigma_{i \ge n+1}=0$ are the Hankel singular values of the input-output system $\Sigma$. For square linear systems,  the nonzero eigenvalues of the Hankel operator associated to the system are the  nonzero eigenvalues 
of the cross Gramian defined as the solution, $W_x$, of $A W_x+W_x A+BC=0 $   \cite{scherpen_survey}.

We also have, for every $x_0 \in {\mathbb C}^n$, \[\label{psic_psio_lin} \Psi_c\Psi_c^{\ast}x_0=W_cx_0, \quad \Psi_o^{\ast}\Psi_ox_0=W_ox_0.\]
Thus $W_c$ and $W_o$ are the matrix representations of the operators $\Psi_c\Psi_c^{\ast}$ and $\Psi_o^{\ast}\Psi_o$ \cite{dullerud}.

For model reduction, typically one looks for a gap in the singular values $\{\sigma_i\}$ for guidance as to where truncation should occur. If  there is a  $k$ such that $\sigma_k \gg\sigma_{k+1}$, then the states most responsible for governing the input-output relationship of the system are $(x_1,\cdots,x_k)$ while $(x_{k+1},\ldots,x_n)$ are assumed to make negligible contributions.

\begin{theorem}\label{thm:error_hankel}\cite{dullerud} Consider a linear system (\ref{linsys}) with its associated  Hankel operator ${\mathbb H}$ (\ref{hankel_lin}). Let ${\mathbb H}_k$ be the Hankel operator of the  reduced order linear system of order $k$. Then, \[||{\mathbb H}-{\mathbb H}_k||=\sigma_{k+1} \]
If $x(0)=0$, the error between  $y$, the output of the full order system, and $y_r$, the output of the reduced order system with $k$ state variables, satisfies \[\label{error_y_yr} ||y-y_r||_2 \le 2 \bigg( \sum_{j=k+1}^n \sigma_j\bigg) ||u||_2 \]

\end{theorem}

If $F$ is unstable then the controllability and observability quantities defined in~\eqref{gramians} are undefined since the integrals will be unbounded. There may, however, still exist solutions to the Lyapunov equations~\eqref{gramians_lyap} when $F$ is unstable ~\cite{therapos, kenney}. Other approaches to balancing unstable linear systems exist (see~\cite{verriest1, verriest2, jonckheere,weiland} for the method of LQG balancing for example). 

Although several methods also exist for computing $Q$~\cite{laub,li}, it is common to simply compute the Cholesky decomposition of $W_o$ so that $W_o=Z Z^{\tr}$, and form the SVD $U \Sigma^2 U^{\tr}$ of  $Z^{\tr} W_c Z$. Then $Q$ is given by $Q=\Sigma^{\frac{1}{2}}U^{\tr}Z^{-1}$.
We also note that the problem of finding the coordinate change $Q$ can be seen as an optimization problem~\cite{antoulas} of the form
$\min_{Q} \mbox{trace}[QW_cQ^{\ast}+Q^{-\ast}W_oQ^{-1}]$. 

\subsection{Balancing of Nonlinear Systems}
In the nonlinear case, the energy functions $L_c$ and $L_o$ in (\ref{L_c}) and (\ref{L_o}) are obtained by solving both a Lyapunov and a Hamilton-Jacobi equation. Here we follow the development of Scherpen~\cite{scherpen_thesis, scherpen_survey}. Consider the nonlinear system
\[\Sigma: \label{sigma}
\left\{\begin{array}{rcl}\dot{x}&=&f(x)+\sum_{i=1}^mg_i(x)u_i,\\ y &=& h(x), \end{array}\right.
\]
with $x \in \RR^n$, $u \in \RR^m$, $y\in \RR^p$, $f(0)=0$, $g_i(0)=0$ for $1 \le i \le m$, and $h(0)=0$.  Moreover, assume the following Hypothesis.\\
{\it Assumption A:} The linearization of~\eqref{sigma} around the origin is controllable, observable and $A=\frac{\partial f}{\partial x}|_{x=0}$ is asymptotically stable.

The controllability operator $\Psi_c: {\cal U} \rightarrow X$ with $X=\RR^n$ and ${\cal U}=L_2^m[0,\infty)$, and the observability operator $\Psi_o: X \rightarrow Y$ with $Y=L_2^p[0,\infty)$ for this system are defined by
\[\Psi_c: u \mapsto x^0: \left\{\begin{array}{l}\dot{x}=-f(x)-g(x)u, \quad x(\infty)=0,\\ x^0=x(0) \end{array} \right. \]
\[\Psi_o:x^0 \mapsto y: \left\{\begin{array}{l}\dot{x}=f(x), \quad x^0=x(0),\\  y=h(x)\end{array} \right. \]
As in the linear case, $\Psi_c$ and $\Psi_o$ represent the input-to-state behavior and the state-to-output behavior, respectively. The Hankel operator for the nonlinear system $\Sigma$ in (\ref{sigma}) is given by the composition of $\Psi_c$ and $\Psi_o$
\[\label{hankel_nonlinear}{\mathbb H} := \Psi_o \circ \Psi_c \]
Consider the norm-minimizing inverse $\Psi_c^{\dag}: X \rightarrow {\cal U}$ 
\[\Psi_c^{\dag}: x^0 \mapsto u:=\mbox{argmin}_{\Psi_c(u)=x^0}||u||. \]
From this point of view $L_c$ in (\ref{L_c}) and $L_o$ in (\ref{L_o}) are
\[L_c(x^0):=\frac{1}{2} ||\Psi_c^{\dag}(x^0) ||^2, \quad L_o(x^0):=\frac{1}{2}||\Psi_o(x^0)||^2\]

\begin{theorem}\label{thm:scherp1}\cite{scherpen_thesis,  scherpen_balancing}
 Consider the nonlinear system $\Sigma$ defined in (\ref{sigma}). If the origin is an asymptotically stable equilibrium of $f(x)$ on a neighborhood $W$ of the origin, then for all $x \in W$, $L_o(x)$ is the unique smooth solution of
\[\label{Lo_hjb} \frac{\partial L_o}{\partial x}(x)f(x)+\frac{1}{2}h^{\tr}(x)h(x)=0,\quad L_o(0)=0 \]
under the assumption that (\ref{Lo_hjb}) has a smooth solution on $W$. Furthermore for all $x \in W$, $L_c(x)$ is the unique smooth solution of
\[\label{Lc_hjb} \frac{\partial L_c}{\partial x}(x)f(x)+\frac{1}{2} \frac{\partial L_c}{\partial x}(x)g(x)g^{\tr}(x)  \frac{\partial^{\tr} L_c}{\partial x}(x)=0, \quad L_c(0)=0\]
under the assumption that (\ref{Lc_hjb}) has a smooth solution $\bar{L}_c$ on $W$ and that the origin is an asymptotically stable equilibrium of $-(f(x)+g(x)g^{\tr}(x) \frac{\partial \bar{L}_c}{\partial x}(x))$ on $W$.
\end{theorem}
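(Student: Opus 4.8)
The plan is to treat the two energy functions separately: $L_o$ by direct differentiation of its defining integral, and $L_c$ as the value function of an infinite-horizon optimal control problem to which a standard dynamic-programming (verification) argument applies. Throughout, $W$ is shrunk as needed so that every trajectory starting in $W$ stays in $W$ and converges to the origin, using the local exponential stability of the origin guaranteed by Assumption~A.

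For the observability function, introduce the flow $\phi_t$ of the uncontrolled dynamics $\dot x=f(x)$; then $L_o(x_0)=\tfrac12\int_0^\infty\|h(\phi_t(x_0))\|^2\,dt$ directly from \eqref{L_o} (taking $t_0=0$ and using time-invariance). Local exponential stability makes trajectories decay exponentially on $W$, so the integral converges, $L_o$ is finite, and $L_o(0)=0$ since $h(0)=0$. Smoothness of $L_o$ on $W$ would follow from smoothness of $f,h$, smooth dependence of $\phi_t(x_0)$ on $x_0$, and differentiation under the integral sign, justified by the uniform exponential decay estimates. The key identity is the semigroup relation $L_o(\phi_t(x_0))=\tfrac12\int_t^\infty\|h(\phi_s(x_0))\|^2\,ds$; differentiating in $t$ at $t=0$ gives $\frac{\partial L_o}{\partial x}(x_0)f(x_0)$ by the chain rule and $-\tfrac12 h^{\tr}(x_0)h(x_0)$ from the integral, so $L_o$ solves \eqref{Lo_hjb}. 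For uniqueness, if $L$ is any smooth solution of \eqref{Lo_hjb} on $W$, then $\frac{d}{dt}L(\phi_t(x_0))=\frac{\partial L}{\partial x}(\phi_t(x_0))f(\phi_t(x_0))=-\tfrac12\|h(\phi_t(x_0))\|^2$; integrating from $0$ to $T$ and letting $T\to\infty$ (using $\phi_T(x_0)\to0$, $L$ continuous, $L(0)=0$) yields $L(x_0)=L_o(x_0)$.

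For the controllability function, first reverse time: with $\tilde x(t)=x(-t)$ and $\tilde u(t)=u(-t)$, the constraint in \eqref{L_c} becomes $\dot{\tilde x}=-f(\tilde x)-g(\tilde x)\tilde u$ on $[0,\infty)$ with $\tilde x(0)=x_0$ and $\tilde x(\infty)=0$, while the cost $\tfrac12\int\|u\|^2$ is preserved, so $L_c$ is the value function of an infinite-horizon regulator problem. Its HJB equation is $0=\min_u\big[\tfrac{\partial L_c}{\partial x}(x)(-f(x)-g(x)u)+\tfrac12\|u\|^2\big]$; the unconstrained quadratic minimization gives $u^\star=g^{\tr}(x)\tfrac{\partial^{\tr}L_c}{\partial x}(x)$, and substituting back produces exactly \eqref{Lc_hjb}. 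To show the hypothesized smooth solution $\bar L_c$ equals $L_c$, run the verification argument in two halves: (i) for any admissible $\tilde u$ the HJB inequality gives $\frac{d}{dt}\bar L_c(\tilde x(t))\ge-\tfrac12\|\tilde u(t)\|^2$, and integrating together with $\bar L_c(\tilde x(T))\to\bar L_c(0)=0$ yields $\bar L_c(x_0)\le\tfrac12\int_0^\infty\|\tilde u\|^2\,dt$, hence $\bar L_c\le L_c$; (ii) the feedback $\tilde u^\star=g^{\tr}\tfrac{\partial^{\tr}\bar L_c}{\partial x}$ produces the closed loop $\dot{\tilde x}=-\big(f(\tilde x)+g(\tilde x)g^{\tr}(\tilde x)\tfrac{\partial^{\tr}\bar L_c}{\partial x}(\tilde x)\big)$, whose origin is asymptotically stable by hypothesis, so this trajectory is admissible and turns the HJB inequality into an equality, achieving cost exactly $\bar L_c(x_0)$; hence $L_c\le\bar L_c$. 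Therefore $L_c=\bar L_c$, and the same computation shows any smooth solution of \eqref{Lc_hjb} whose associated vector field $-(f+gg^{\tr}\tfrac{\partial^{\tr}L_c}{\partial x})$ is asymptotically stable must coincide with $L_c$, which is the asserted uniqueness.

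The routine part is the chain-rule and HJB bookkeeping; the delicate points are analytic. For $L_o$ the real work is establishing that the integral defines a \emph{smooth} function on $W$ — this needs the local exponential stability from Assumption~A, uniform-in-$x_0$ decay bounds, and a careful justification of differentiation under the integral, and it is also why one may have to shrink $W$. For $L_c$ the crux is the behavior at infinity: one must ensure the candidate optimal trajectory remains in $W$ and tends to $0$ so that $\bar L_c(\tilde x(T))\to0$ may legitimately be invoked in the verification estimate, which is precisely the content of the assumed asymptotic stability of $-(f+gg^{\tr}\tfrac{\partial^{\tr}\bar L_c}{\partial x})$.
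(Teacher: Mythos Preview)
The paper does not supply its own proof of this theorem: it is quoted verbatim from Scherpen's thesis~\cite{scherpen_thesis} as background, with no argument given in the text. So there is nothing in the paper to compare against line by line.

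That said, your argument is the standard one and is essentially what Scherpen does. The $L_o$ part---differentiate $t\mapsto L_o(\phi_t(x_0))$ along the uncontrolled flow to obtain the Lyapunov-type PDE, and integrate any other smooth solution along the same flow for uniqueness---is correct. For $L_c$, the time-reversal followed by an HJB verification is exactly the right mechanism, and your two-sided estimate (arbitrary admissible control gives $\bar L_c\le L_c$; the feedback $u^\star=g^{\tr}\tfrac{\partial^{\tr}\bar L_c}{\partial x}$ together with the assumed asymptotic stability of the closed loop gives $\bar L_c\ge L_c$) is the standard verification-theorem template. One small point worth tightening: in step~(i) you invoke $\bar L_c(\tilde x(T))\to 0$ for an \emph{arbitrary} admissible control, which requires that the trajectory stays in $W$ for all time; strictly speaking the infimum in~\eqref{L_c} is over controls in $L_2$ steering $0\to x_0$, with no a~priori confinement to $W$, so one should either restrict to admissible controls whose trajectories remain in $W$ (which is how the local statement is to be read) or first show $\bar L_c\ge 0$ on $W$ so that the limit term can be dropped as a nonnegative remainder. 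You flag this issue yourself in the last paragraph, so this is a matter of making the quantifiers explicit rather than a gap.
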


With the controllability and the observability functions on hand, the input-normal/output-diagonal realization of system~\eqref{sigma} can be computed by way of a coordinate transformation. More precisely,

\begin{theorem}\label{theorem_scherpen}\cite{scherpen_thesis,  scherpen_balancing}
Consider system~\eqref{sigma} under Assumption A and the assumptions in Theorem~\ref{thm:scherp1}. Then, there exists a neighborhood $W$ of the origin and coordinate transformation $x=\varphi(z)$ on $W$ converting  the energy functions  into the form
$L_c(\varphi(z))=\frac{1}{2}z^{\tr}z$  and $L_o(\varphi(z))=\frac{1}{2}\sum_{i=1}^nz_i^2\sigma_i(z_i)^2,$
where $\sigma_1(x) \ge \sigma_2(x) \ge \cdots \ge \sigma_n(x)$. The functions $\sigma_i(\cdot)$ are called {\em Hankel singular value functions}.
\end{theorem}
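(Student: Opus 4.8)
The plan is to obtain $\varphi$ as a composition $\varphi=\varphi_{1}\circ\varphi_{2}$ of two local diffeomorphisms fixing the origin: first $\varphi_{1}$ puts $L_c$ into \emph{input-normal} form $\tfrac12 z^{\top}z$, and then $\varphi_{2}$, chosen so as not to disturb this form, \emph{diagonalizes} $L_o$. Everything is local, on a neighborhood $W$ of the origin that is allowed to shrink finitely many times. By Theorem~\ref{thm:scherp1} together with Assumption~A, $L_c$ and $L_o$ exist, are smooth on $W$, vanish with their gradients at $0$, and --- linearizing the Hamilton-Jacobi equations \eqref{Lo_hjb} and \eqref{Lc_hjb} and using controllability and observability of the linearization --- have positive-definite Hessians $\nabla^{2}L_c(0)=W_c^{-1}$ and $\nabla^{2}L_o(0)=W_o$; in particular each of $L_c,L_o$ has a nondegenerate minimum at $0$.

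\emph{Step 1 (input-normal form).} Since $L_c(0)=0$ and $dL_c(0)=0$, Taylor's formula with integral remainder yields $L_c(x)=\tfrac12\,x^{\top}P(x)x$ with $P$ smooth and symmetric and $P(0)=W_c^{-1}\succ0$, hence $P(x)\succ0$ on a smaller neighborhood. Let $R(x)=P(x)^{1/2}$ be the (smooth) symmetric positive-definite square root and set $z=R(x)x$; its Jacobian at the origin is $R(0)$, which is invertible, so $x\mapsto R(x)x$ is a local diffeomorphism. Calling its inverse $\varphi_{1}$, one gets $L_c(\varphi_{1}(z))=\tfrac12\,(R(x)x)^{\top}(R(x)x)=\tfrac12 z^{\top}z$ --- this is the Morse lemma for $L_c$ made explicit. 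Writing $\widetilde L_{o}:=L_o\circ\varphi_{1}$, this function still has a nondegenerate minimum at $0$, and $\widetilde L_{o}(z)=\tfrac12\,z^{\top}Q(z)z$ for a smooth symmetric $Q$ with $Q(0)=W_c^{1/2}W_oW_c^{1/2}\succ0$, whose eigenvalues are the squared Hankel singular values of the linearization.

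\emph{Step 2 (output-diagonalization).} I look for $\varphi_{2}$ among the \emph{state-dependent orthogonal} maps $\varphi_{2}(w)=\Theta(w)w$ with $\Theta(w)\in O(n)$ smooth: any such map obeys $\|\varphi_{2}(w)\|=\|w\|$, hence preserves the input-normal form, and has Jacobian $\Theta(0)$ at the origin, so it is a local diffeomorphism. The goal is to pick $\Theta$ so that $\widehat L_{o}(w):=\widetilde L_{o}(\Theta(w)w)$ is a sum of single-variable functions, $\widehat L_{o}(w)=\tfrac12\sum_{i}w_i^{2}\sigma_i(w_i)^{2}$, which is equivalent to demanding that every mixed second derivative $\partial^{2}\widehat L_{o}/\partial w_i\partial w_j$ with $i\ne j$ vanish identically. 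At $w=0$ this is precisely linear balancing: take $\Theta(0)=U_0$ orthogonal with $U_0^{\top}Q(0)U_0$ diagonal and decreasing, so $\sigma_i(0)^{2}$ are the squared Hankel singular values above. Away from $0$ one propagates this choice: the vanishing-off-diagonal equations form a system of first-order PDEs for the matrix function $\Theta$, solvable on a neighborhood of $0$ by an implicit-function / successive-approximation argument, with the linearized operator at $w=0$ invertible precisely because the eigenvalues of $Q(0)$ are simple. Once $\widehat L_{o}$ is separable, its $i$-th summand vanishes to second order at $w_i=0$ and so equals $w_i^{2}$ times a smooth function, which defines $\sigma_i(w_i)^{2}$; ordering the diagonal coefficients decreasingly at each point gives $\sigma_1\ge\cdots\ge\sigma_n$, and $\varphi=\varphi_{1}\circ\varphi_{2}$ is the required transformation.

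The hard part is the genericity hidden in Step~2. The pointwise spectral construction, and the decreasing ordering of the diagonal coefficients, are smooth only where the Hankel singular value functions remain distinct --- equivalently, where the spectrum of $\nabla^{2}\widetilde L_{o}$ relative to $\nabla^{2}L_c$ is simple. Crossings (already possible for the linearization, and in general unavoidable along hypersurfaces in $w$-space) force one to group the repeated values into blocks and allow only within-block rotations, or to reach the general case by a perturbation/limiting argument; this, rather than the Taylor expansions, is where the real effort goes. One must also check that each constructed map is a genuine diffeomorphism on the (repeatedly shrunk) neighborhood $W$, and that the asymptotic-stability hypotheses in Theorem~\ref{thm:scherp1} --- which make $L_c$ and $L_o$ the \emph{unique} smooth solutions there --- are inherited under the coordinate changes, so that the energy functions being normalized are indeed those in the statement.
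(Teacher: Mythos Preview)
The paper does not supply its own proof of this theorem: it is quoted verbatim from Scherpen's thesis and cited as such, so there is no argument in the paper to compare yours against. I can only assess your proposal on its own terms and against Scherpen's original construction.

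Your Step~1 is correct and is essentially Scherpen's first move: factor $L_c(x)=\tfrac12 x^{\top}M(x)^{\top}M(x)x$ with $M(0)$ invertible, and take $z=M(x)x$ to reach input-normal form. The Morse-lemma phrasing is fine.

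Step~2, however, overshoots the target. You read the paper's displayed formula as asserting that each $\sigma_i$ depends on the single coordinate $z_i$ only, and you therefore seek a $\Theta(w)\in O(n)$ making $\widehat L_o$ \emph{fully separable}. That is stronger than what Scherpen actually proves: in her theorem the singular-value functions depend on the \emph{entire} state, $\sigma_i=\sigma_i(z)$ (note that even the paper's own ordering clause writes ``$\sigma_1(x)\ge\cdots\ge\sigma_n(x)$'', betraying the intended dependence on the full state). Scherpen's construction is accordingly much simpler than your PDE scheme: with $\widetilde L_o(z)=\tfrac12 z^{\top}N(z)z$, one diagonalizes $N$ \emph{pointwise} by an orthogonal $U(z)$ and sets the new coordinates via $z=U(w)w$, obtaining a diagonal quadratic form whose diagonal entries are the $\sigma_i(z)^2$. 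No integrability conditions arise because nothing is being asked to separate variable-by-variable.

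By contrast, your ``vanishing of all mixed second partials of $\widehat L_o$'' imposes $\binom{n}{2}$ scalar PDEs on a map $\Theta$ taking values in the $\binom{n}{2}$-dimensional manifold $O(n)$; this is formally determined but carries compatibility (integrability) conditions that you do not verify, and which in general fail --- a generic smooth function of $n$ variables cannot be made separable by any diffeomorphism, let alone by one of the restricted form $w\mapsto\Theta(w)w$. The implicit-function sketch you give does not address this obstruction. You also correctly flag the eigenvalue-crossing issue, but that difficulty is orthogonal to the separability gap: it already appears in Scherpen's pointwise construction and is the real technical wrinkle there.

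In short: keep Step~1, but replace Step~2 by pointwise orthogonal diagonalization of the quadratic-form matrix for $\widetilde L_o$, and state the conclusion with $\sigma_i(z)$ rather than $\sigma_i(z_i)$.
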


Analogous to the linear case, the system's states can be sorted in order of importance by sorting
the singular value functions, and reduction proceeds by removing the least important states.

In the above framework for balancing of nonlinear systems, one needs to solve (or numerically evaluate) the PDEs (\ref{Lo_hjb}), (\ref{Lc_hjb}) and compute the coordinate change $x=\varphi(z)$, however there are no systematic methods or tools for solving these problems. Various approximate solutions based on Taylor series expansions have been proposed~\cite{krener1,krener2,fujimoto}. Newman and Krishnaprasad~\cite{newman} introduce a statistical approximation based on exciting the system with white Gaussian noise and then computing the balancing transformation using an algorithm from differential topology. As mentioned earlier, an essentially linear empirical approach was proposed in~\cite{lall}. In this paper, we combine aspects of both data-driven approaches and analytic approaches by carrying out linear balancing of nonlinear control systems in a suitable RKHS.

\section{Empirical Balancing of Nonlinear Systems in RKHS}\label{sec:empirical_gramians}
We consider a general nonlinear system of the form
\[\label{eqn:nlsys}
\left\{\begin{array}{rcl} \dot{x}&=&f(x,u)\\ y&=&h(x) \end{array}\right.
\]
with $x \in \RR^n$, $u \in \RR^m$, $y \in \RR^p$, $f(0,0)=0$, and $h(0)=0$. Let ${\cal R}(x_0)=\{x' \in \RR^n: \exists\, u \in L_{\infty}(\RR,\RR^m) \;\;\mbox{and}\;\; \exists\, T \in [0,\infty) \;\; \mbox{such that}\; \;  x(0)=x_0\;\; \mbox{and}\;\; x(T)=x' \}$ be the reachable set from the initial condition $x(0)=x_0$.

{\it Hypothesis H:}\footnote{Let us note here that this assumption is similar to Assumption A above and that  we made it mainly out of convenience but is not necessary as illustrated in the 2D and 7D examples below. } The system (\ref{eqn:nlsys})  is zero-state observable, its linearization around the origin is controllable, and the origin of $\dot{x}=f(x,0)$ is asymptotically stable.

We treat the problem of estimating the observability and controllability Gramians as one of estimating an integral operator from data in a reproducing kernel Hilbert space (RKHS)~\cite{AronRKHS}, cf. Appendix for definition and key results on RKHSes. \emph{Our approach hinges on the key modeling assumption that the nonlinear dynamical system can be embedded in an appropriate high (or possibly infinite) dimensional RKHS where the method of linear balancing can be applied}.  More precisely, we will essentially assume that there is an RKHS ${\cal H}$   and   maps 
${\Phi, \Psi}: \RR^n \rightarrow {\cal H}; x \mapsto {\cal H}$ such that controllability and observability energies of the nonlinear system (\ref{eqn:nlsys}) are ``linearized'', i.e. that in ${\cal H}$ they have an expression similar  to the one in the linear case (\ref{lin_energies}) and, therefore, can be written as 
\[L_c(x) \approxeq\frac{1}{2}{ \Phi}^T(x) {\mathbb W}_c^{-1} { \Phi}(x), \quad  L_o(x)\approxeq \frac{1}{2}{ \Psi}^T(x) {\mathbb W}_o { \Psi}(x), \]
with $ {\mathbb W}_c, {\mathbb W}_o \in \RR^{N \times N}$, $N \gg n $, are very large dimensional matrices\footnote{If the system is affine in the input, we will use the PDEs (\ref{Lo_hjb}) and (\ref{Lc_hjb}) to find ${\Phi}$,  ${ \Psi}$, ${\mathbb W}_c$ and  ${\mathbb W}_o$. We leave such analysis for future work.}.   

By ``linearization" here, we mean  \emph{mapping the nonlinear system in a higher dimensional Hilbert space where linear theory can be applied}. To illustrate this point \cite{smola_book}, consider a polynomial in $\RR$, $p(x)=\alpha+\beta x + \gamma x^2$ where $\alpha$, $\beta$, and $\gamma$ are reals. If we consider the map $\Phi: \RR \rightarrow \RR^3$ defined as $\Phi(x)=[1 \; x \; x^2]^T$ then $p(x)= {\boldsymbol \alpha}\cdot [1 \; x \; x^2] ^T= {\boldsymbol \alpha}\cdot\Phi(x)$ is an affine polynomial in the variable $\Phi(x)$. 

Another example to illustrate our thought process is the one of Support Vector Machines (SVMs) that we referred to in the introduction. More precisely, consider the problem of classifying points in a data set $D=((x_1,y_1),\cdots,(x_n,y_n))$ with $x_i \in X$ with $X$ a set and $y_i=\pm 1$, i.e.  trying to find  $w \in \RR^d$ with $||w||_2=1$ and  $b \in \RR$ such that $\langle w,x_i \rangle +b >0$ for all $i$ with $y_i=+1$, and $\langle w,x_i \rangle+b <0$ for all $i$ with $y_i=-1$, i.e. that the linear hyperplane characterized by $(w,b)$ perfectly separates the set $D$ into two groups of data points, the ones with $y_i=+1$ and the ones with $y_i=-1$. Sometimes, it is possible to find such an hyperplane but, in general, finding a linear hyperplane that perfectly separates a given data set $D$ is not always possible and finding $(w,b)$ will not be possible. To solve the classification problem, the SVM algorithm maps the input data $(x_1,\cdots,x_n)$ into a (possibly infinite-dimensional) Hilbert space ${\cal H}$, the so-called \emph{feature space}, by a typically nonlinear map $\Phi:X \rightarrow {\cal H}$ called the \emph{feature map}. Then one looks for a linear hyperplane that separates the data $((\Phi(x_1),y_1),\cdots,(\Phi(x_n),y_n))$, i.e. one looks for $(w,b)$ in  ${\cal H}$. When this is possible, the data in $D$ will be classified in two categories, the ones with $y_i=+1$ and the ones with $y_i=-1$, but  the separating curve for $D$ will not be a linear hyperplane in the original space (even if it is a linear hyperplane in ${\cal H}$). An important property of SVMs is that for every dataset D without contradicting points, i.e. $(x_i,y_i)$ and $(x_j,y_j)$ with $x_i=x_j$ and $y_i \ne y_j$, there exists a feature map  that allows the  perfect separation by a hyperplane in the feature space \cite{steinwart_svms}.

 Our aim is to generalize this way of thinking to nonlinear dynamical systems, i.e. given a problem for a nonlinear dynamical system, we map the state variables by a typically nonlinear map $\Phi:X \rightarrow {\cal H}$  where ${\cal H}$ is  (possibly infinite-dimensional) Hilbert space  in which the computations become simpler\footnote{One could also think of the methods in Quantum Mechanics where one constructs a Hilbert space  from measurements in order to perform computations.}. In our case, ``simpler'' means applying Linear Theory. In this paper, we will focus on the problem of approximation of nonlinear control systems by applying the method of linear balancing in RKHSes. We leave other applications for future work.

Covariance operators in RKHSes and their empirical estimates are the objects of primary importance and contain the information needed to perform model reduction. In particular, the (linear) observability and controllability Gramians  are estimated and diagonalized in the RKHS, but capture nonlinearities in the original state space. The reduction approach we propose adapts ideas from kernel PCA (KPCA)~\cite{KPCA:98} and is driven by a set of simulated or sampled system trajectories, extending and generalizing the work of Moore~\cite{moore} and Lall et al.~\cite{lall}. 

Our method works quite well since the controllability and the observability energies in the linear case can be expressed as inner products (\ref{lin_energies}) and \emph{working in RKHSes allows to find nonlinear versions of linear algorithms that can be expressed in terms of inner products} (this is the so-called \emph{kernel trick} in Learning Theory, see Appendix A for more explanations). Hence the empirical Gramians defined below for nonlinear systems can be viewed as reasonable approximations of the controllability and observability energies for nonlinear systems.

In the development below we lift state vectors of the system (\ref{eqn:nlsys}) into a Hilbert space ${\cal H}$, i.e. we consider a mapping $\Phi: \RR^n \rightarrow {\cal H}$ and analyze the nonlinear system whose state is $\Phi(x)$.



\subsection{Empirical Gramians in RKHS}
Following~\cite{moore}, we estimate the controllability Gramian by exciting each coordinate
of the input with impulses\footnote{This is not a limitation of our approach, other input signals can be used such as a white gaussian noise, cf. \cite{acc2012} for preliminary results.} while setting $x_0 = 0$. One can also further excite using rotations of impulses
as 
suggested in~\cite{lall}, however for simplicity we consider only the original signals proposed
in~\cite{moore}. Let $u^i(t) = \delta(t)e_i$ be the $i$-th excitation
signal, and let $x^i(t)$ be the corresponding response of the system. Form the matrix \[\label{resp1} X(t) = \bigl[x^1(t) ~\cdots~ x^m(t)\bigr] \in \RR^{n\times m},\] so that $X(t)$ is seen as a data matrix with
column observations given by the respective responses $x^i(t)$. Then $W_c\in\RR^{n\times n}$ is given by
\[\label{wc_empirical}
W_c = \frac{1}{m}\int_0^{\infty}X(t)X(t)^{\tr} dt.
\]
We can approximate this integral by sampling the matrix function $X(t)$ within a finite time interval $[0,T]$ assuming the regular partition $\{t_i\}_{i=1}^N, t_i = (T/N)i$. This leads to the empirical controllability Gramian
\[\label{wc_hat}
\widehat{W}_c = \frac{T}{mN}\sum_{i=1}^N X(t_i)X(t_i)^{\tr} .
\]

As described in~\cite{moore}, the observability Gramian is estimated by
fixing $u(t) = 0$, setting $x_0 = e_i$ for $i=1,\ldots,n$, and measuring the corresponding system output responses $y^i(t)$. As before, assemble the responses into a matrix $Y(t) = [y^1(t) \cdots y^n(t)]\in \RR^{p\times n}$. The observability Gramian $W_o\in\RR^{n\times n}$ and its empirical
counterpart $\widehat{W}_o$ are given by
\[\label{wo_empirical}
W_o = \frac{1}{p}\int_0^{\infty}Y(t)^{\tr}Y(t) dt\]
and
\[\label{wo_hat} \widehat{W}_o = \frac{T}{pN}\sum_{i=1}^N \widetilde{Y}(t_i)\widetilde{Y}(t_i)^{\tr}
\]
where $\widetilde{Y}(t) = Y(t)^{\tr}$.
The matrix $\widetilde{Y}(t_i)\in\RR^{n\times p}$ can be thought of as a data matrix with column observations
\begin{equation}\label{eqn:obs_data}
d_j(t_i) = \bigl(y_j^1(t_i), \ldots, y_j^n(t_i)\bigr)^{\!\tr} \in\RR^n,\quad j=1,\ldots,p, \,\,i=1,\ldots, N
\end{equation}
so that $d_j(t_i)$ corresponds to the response at time $t_i$ of the single output coordinate $j$ to each of the (separate) initial conditions $x_0=e_k, k=1,\ldots,n$. This convention will lead to greater clarity in the steps that follow.

\subsection{Model Order Reduction Map}
The method we propose consists, in essence, of collecting samples and then performing a process similar to ``simultaneous principal components analysis'' on the controllability and observability Gramian estimates in the (same) RKHS. 


As mentioned above, given a choice of the kernel $K$ defining a RKHS $\cH$, principal components in the feature space can be computed implicitly in the original input space using $K$. It is worth emphasizing however that we will be co-diagonalizing {\em two} Gramians in the feature space by way of a {\em non-orthogonal} transformation; the process bears a resemblance to (K)PCA, and yet is distinct. Indeed the favorable properties associated with an orthonormal basis are no longer available, the quantities we will in practice diagonalize are different, and the issue of data-centering must be considered with some additional care.

First note that the empirical controllability Gramian $\widehat{W}_c$ can be viewed as the sample covariance of a collection of $N\cdot m$ vectors, scaled by $T$
\begin{equation}\label{eqn:contgram_vectors}
\widehat{W}_c = \frac{T}{mN}\sum_{i=1}^N X(t_i)X(t_i)^{\tr} =
\frac{T}{mN}\sum_{i=1}^N\sum_{j=1}^m x^j(t_i)x^j(t_i)^{\tr}
\end{equation}
where $X(t)$ is defined in (\ref{resp1}) and the observability Gramian can be similarly viewed as the sample covariance of a collection of $N\cdot p$ vectors
\begin{equation}\label{eqn:obsgram_vectors}
\widehat{W}_o = \frac{T}{pN}\sum_{i=1}^N\sum_{j=1}^p d_j(t_i)d_j(t_i)^{\tr}
\end{equation}
where the $d_j$ are defined in Equation~\eqref{eqn:obs_data}.

We can thus consider three quantities of interest: 
\begin{itemize} 
\item The \emph{controllability kernel matrix} $K_c\in\RR^{Nm\times Nm}$ of kernel
products 
\[\label{controllability_kernel_matrix} (K_c)_{\mu\nu} = K(x_\mu, x_\nu) = \scal{\Phi(x_\mu)}{\Phi(x_\nu)}_{\cal H}\] 
for $\mu,\nu=1,\ldots,Nm$ where we have re-indexed the set of vectors  $\{x^{j}(t_i)\}_{i,j} = \{x_{\mu}\}_{\mu}$ to use a single linear index.
\item The \emph{observability kernel matrix} $K_o\in\RR^{Np\times Np}$,
\[\label{eqn:obs_kern_mat}
(K_o)_{\mu\nu} = K(d_\mu, d_\nu) = \scal{\Phi(d_\mu)}{\Phi(d_\nu)}_{\cal H}
\]
for $\mu,\nu=1,\ldots,Np$, where we have again re-indexed the set $\{d_j(t_i)\}_{i,j}=\{d_\mu\}_{\mu}$ for simplicity.
\item The \emph{Hankel kernel matrix} $K_{o,c}\in\RR^{Np\times Nm}$,
\[\label{Hankel_kernel_matrix} (K_{o,c})_{\mu\nu} = K(d_\mu, x_\nu) = \scal{\Phi(d_\mu)}{\Phi(x_\nu)}_{\cal H}\]
for $\mu=1,\ldots,Np$, $\nu=1,\ldots,Nm$.
\end{itemize}
We have chosen the suggestive terminology ``Hankel kernel matrix'' above because the square-roots of the  nonzero eigenvalues of the matrix $K_{o,c}K_{o,c}^{\tr}$ are the empirical Hankel singular values of the system mapped into feature space\footnote{The relation between the singular values and the eigenfunctions of the Hankel operator for the nonlinear system given by ${\mathbb H}$ in (\ref{hankel_nonlinear}) and the extension of (\ref{hankel_lin1})-(\ref{hankel_lin2}) to the nonlinear setting using  the empirical Hankel singular values and eigenvectors of $K_{o,c}K_{o,c}^{\tr}$  is an open problem that we leave for future work. }, where we assume that the method of linear balancing can be applied. This assertion will be proved immediately below. Note that ordinarily, $Nm,Np\gg n$ and $K_c,K_o$ will be rank deficient.


Before proceeding we consider the issue of data centering in feature space. PCA and kernel PCA assume that the data have been centered in order to make the problem translation invariant. In the setting considered here, we have two distinct sets of data: the observability samples and the controllability samples. A reasonable centering convention centers the data in each of these datasets separately. Let $\bPsi$ denote
the matrix whose columns are the observability samples mapped into feature space by the feature map $\Phi$, and let $\bPhi$ be the matrix similarly built from the feature space representation of the controllability samples. Then \[K_o = \bPsi^{\tr}\bPsi, \quad K_c=\bPhi^{\tr}\bPhi, \quad \mbox{and} \quad K_{o,c} = \bPsi^{\tr}\bPhi. \]
The above equation reduces to (\ref{wc_hat}) and (\ref{wo_hat}) in the linear case. In fact, when $\Phi(x)=x$, 
$(K_c)_{\mu \nu}=\langle x_{\mu}, x_{\nu}\rangle_{\RR^n}$, $(K_o)_{\mu \nu}=\langle d_{\mu}, d_{\nu}\rangle_{\RR^n},$ $(K_{o,c}) =\langle d_{\mu}, x_{\nu}\rangle_{\RR^n}$.

 Assume for the moment that there are $M$ observability data samples and $N$ controllability samples, and let
$\bbone_N, \bbone_M$ denote the length  $N$, $M$ vectors of all ones, respectively. We can define centered versions of the feature space data matrices $\bPhi,\bPsi$ as
\[
\widetilde{\bPhi} = \bPhi - \mu_c\bbone^{\tr}_N, \qquad \widetilde{\bPsi} = \bPsi - \mu_o\bbone^{\tr}_M
\]
where $\mu_c:=N^{-1}\bPhi\bbone_N$ and $\mu_o:=M^{-1}\bPsi\bbone_M$. We will need two centered quantities in the development below.  Let us note here that, in practice, we do not need to compute $\mu_c$ and $\mu_o$ as detailed above. Moreover, there is no need to compute the embedding $\Phi$ since all computations are done in terms of the kernels.

The first centered quantity we consider is the centered version of $K_{o,c}$, namely $\widetilde{K}_{o,c}=\widetilde{\bPsi}^{\tr}\widetilde{\bPhi}$. Although one cannot compute $\mu_c,\mu_o$ explicitly from the data, we can compute $\widetilde{K}_{o,c}$ by observing that
\begin{align}\label{eqn:koc_centering}
\widetilde{K}_{o,c} &= \bigl(\bPsi - \mu_o\bbone^{\tr}_M\bigr)^{\tr}\bigl(\bPhi - \mu_c\bbone^{\tr}_N\bigr)\nonumber\\
 &= K_{o,c} - \tfrac{1}{N}K_{o,c}\bbone_N\bbone^{\tr}_N - \tfrac{1}{M}\bbone_M\bbone^{\tr}_MK_{o,c} 
    + \tfrac{1}{NM}\bbone_M\bbone_M^{\tr}K_{o,c}\bbone_N\bbone_N^{\tr}.
 \end{align}
The second quantity we'll need is a centered version of the {\em empirical observability feature map}
\[\label{eqn:emp_kmap}
\mathbf{k}_o(x):=\bPsi^{\tr}\Phi(x) = \bigl(K(x,d_1),\ldots,K(x,d_M)\bigr)^{\tr}
\]
where $x\in\RR^d$ is the state variable and the observability samples $\{d_j\}$ are again indexed by a single variable as in Equation~\eqref{eqn:obs_kern_mat}. Centering follows reasoning similar to that of the Hankel kernel matrix immediately above:
\begin{align}\label{eqn:ko_centering}
\widetilde{\mathbf{k}}_o(x) &= \bigl(\bPsi - \mu_o\bbone^{\tr}_M\bigr)^{\tr}\bigl(\Phi(x) - \mu_c\bigr) \nonumber\\
&= \mathbf{k}_o(x) - \tfrac{1}{N}K_{o,c}\bbone_N - \tfrac{1}{M}\bbone_M\bbone^{\tr}_M\mathbf{k}_o(x) + 
\tfrac{1}{NM}\bbone_M\bbone_M^{\tr}K_{o,c}\bbone_N.
\end{align}
Note: {\em Throughout the remainder of this paper we will drop the special notation $\widetilde{K}_{o,c}$, $ \widetilde{\mathbf{k}}_o(x)$ and assume that $K_{o,c}, \mathbf{k}_o(x)$ are centered appropriately.}

With the quantities defined above, we can co-diagonalize the empirical Gramians (balancing) and reduce the dimensionality of the state variable (truncation) in feature space by carrying out calculations in the original data space. As we are assuming that the method of linear balancing applies in the feature space, the order of the model can be reduced by discarding small Hankel values $\{\Sigma_{ii}\}_{i=q+1}^n$, and projecting onto the subspace associated with the first $q<n$ largest eigenvalues. The following key result describes this process:

\begin{theorem}[Balanced Reduction in Feature Space]\label{thm:redmap_thm} 
Consider the nonlinear control system (\ref{eqn:nlsys}) and its responses (\ref{resp1}) and (\ref{eqn:obs_data}) to impulses from the input and initial condition, respectively. Let $K$ be a Mercer kernel,   $K_{o,c}$ be the Hankel kernel matrix   defined in (\ref{Hankel_kernel_matrix}), and $K_{o,c}K_{o,c}^{\tr}=V\Sigma^2V^{\tr}$ be its SVD decomposition with $\Sigma=\mbox{diag}\{\sigma_1,\cdots \sigma_{Np} \}$ and $\sigma_i \ge \sigma_{i+1}$ for $i=1,\cdots Np$. Also consider  the controllability kernel matrix $K_c$ (\ref{controllability_kernel_matrix}), the observability kernel matrix  $K_o$ (\ref{eqn:obs_kern_mat}).

If there is a spectral gap in $\Sigma$, i.e. there is $q$ such that  $\sigma_q >> \sigma_{q+1}$ then balanced reduction in the RKHS can be accomplished by applying the state-space reduction map
$\Pi:\RR^n\to\RR^q$ given by
\begin{equation}\label{eqn:pi_map}
\Pi(x) = T_q^{\tr}\mathbf{k}_o(x), \quad x\in\RR^n
\end{equation}
where $T_q = V_q\Sigma^{-1/2}_q$, $V_q$ are the eigenvectors that correspond to the largest $q$ Hankel singular values.
$\mathbf{k}_o(x)$ is the empirical observability feature map (\ref{eqn:ko_centering}). 
\end{theorem}
\begin{proof}
We assume the data have been centered in feature space. Let $\bPhi$ be a matrix with columns $\bigl\{\Phi(x^j(t_i))\bigr\}, i=1,\ldots,N, j=1,\ldots,m$, so that \[X=\bPhi\bPhi^{\tr},\] is the feature space controllability Gramian counterpart to Equation~\eqref{eqn:contgram_vectors}. Similarly,
let $\bPsi$ be a matrix with columns $\bigl\{\Phi(d_j(t_i))\bigr\}, i=1,\ldots,N, j=1,\ldots,p$, so that
\[Y=\bPsi\bPsi^{\tr},\] is the feature space observability Gramian counterpart to Equation~\eqref{eqn:obsgram_vectors}. Since by definition $K(x,y)=\scal{\Phi(x)}{\Phi(y)}_{\cF}$, we also have that $K_c=\bPhi^{\tr}\bPhi$ and $K_o=\bPsi^{\tr}\bPsi$. In general the Gramians $X,Y$ are infinite dimensional whereas the kernel matrices $K_c, K_o$ are necessarily of finite dimension.

We now carry out linear balancing on $(X,Y)$ in the feature space (RKHS).  First, take the SVD of $X^{1/2}\bPsi$ so that
\begin{align}
U\Sigma V^{\tr} &= X^{1/2}\bPsi \label{eqn:firstSVD} \\
U\Sigma^2U^{\tr} &= (X^{1/2}\bPsi)(X^{1/2}\bPsi)^{\tr} = X^{1/2}YX^{1/2}  \label{eqn:secSVD}\\
 V\Sigma^2V^{\tr}&= (X^{1/2}\bPsi)^{\tr}(X^{1/2}\bPsi) = \bPsi^{\tr}\bPhi\bPhi^{\tr}\bPsi = K_{o,c}K_{o,c}^{\tr}. \label{eqn:thSVD}
\end{align}
The last equality in Equation~\eqref{eqn:secSVD} follows since $X$ is symmetric and therefore $X^{1/2}$ is too. The linear balancing transformation is then given by \[{\cal T}=\Sigma^{1/2}U^{\tr}X^{-1/2},\] and one can readily verify that ${\cal T} X{\cal T}^{\tr}={\cal T}^{-\tr}Y{\cal T}^{-1}=\Sigma$. Here, inverses should be interpreted as peudo-inverses when appropriate\footnote{Such as in the case of $X^{-1/2}$ when the number of data points is less than the dimension of the RKHS.}. From Equations~\eqref{eqn:firstSVD}-\eqref{eqn:thSVD}, we see that $U^{\tr}=\Sigma^{-1}V^{\tr}\bPsi^{\tr}X^{1/2}$ and thus ${\cal T}=\Sigma^{-1/2}V^{\tr}\bPsi^{\tr}$. We can project an arbitrary mapped data point $\Phi(x)$ onto the (balanced) ``principal'' subspace of dimension $q$ spanned by the first $q$ rows of ${\cal T}$ by computing
\[\label{eqn:feat_space_proj}
{\cal T}_q\Phi(x) = \Sigma_q^{-1/2}V_q^{\tr}\bPsi^{\tr}\Phi(x) = \Sigma_q^{-1/2}V_q^{\tr}\mathbf{k}_o(x)
\]
where $\mathbf{k}_o(x):=\bPsi^{\tr}\Phi(x)$ is the empirical observability feature map, recalling that $V_q$ is the matrix formed by taking the top $q$ eigenvectors of $K_{o,c}K_{o,c}^{\tr}$ by Equation~\eqref{eqn:thSVD}.
\end{proof}

We note that square roots of the non-zero eigenvalues of $K_{o,c}K_{o,c}^{\tr}$ are exactly the Hankel singular values of the system mapped into the feature space, under the assumption of linearity in the feature space. This can be seen by noting that $\lambda_+(YX)=\lambda_+(X^{1/2}YX^{1/2})=\lambda_+(K_{o,c}K_{o,c}^{\tr})$, where $\lambda_+(\cdot)$ refers to the non-zero eigenvalues of its argument. In practice, we compute the largest eigenvalues of $K_{o,c}K_{o,c}^{\tr}$ instead of performing its SVD.

In Section~\ref{sec:closed_sys} below we show how to use the nonlinear reduction map~\eqref{eqn:pi_map} to realize a closed, reduced order system which can approximate the original system to a high degree of accuracy.



\section{Closed Dynamics of the Reduced System}\label{sec:closed_sys}
Given the nonlinear state space reduction map $\Pi:\RR^n\to\RR^q$, a remaining challenge is to
construct a corresponding (reduced) dynamical system on the reduced state space which well approximates
the input-output behavior of the original system on the original state space. Setting $x_r = \Pi(x)$ and
applying the chain rule,
\begin{equation}\label{eqn:exact_closed}
\dot{x}_r = \left.\bigl(J_{\Pi}(x)f(x,u)\bigr)\right|_{x = \Pi^{\dag}(x_r)}
\end{equation}
where $\Pi^{\dag}$ refers to an appropriate notion (to be defined) of the inverse of $\Pi$.
However we are faced with the difficulty that the map $\Pi$ is not in general surjective
(even if $q=n$), and moreover one cannot guarantee that an arbitrary point in the RKHS has a non-empty
preimage under $\Phi$~\cite{mika98}. We propose an
approximation scheme to get around this difficulty: The dynamics $f$ will be approximated by an
element of an RKHS {\em defined on the reduced state space}. When $f$ is assumed to be known explicitly it can be approximated to a high degree of accuracy. An approximate, least-squares notion of $\Pi^{\dag}$ will be given to first or second order via a Taylor series expansion, but only where it is strictly needed -- and at the last possible moment -- so that a first or second order approximation will not be as crude as one might suppose. We will also consider, as an alternative, a direct approximation of $J_{\Pi}(\Pi^{\dag}(x_r))$ which takes into account further properties of the reproducing kernel as well as the fact that the Jacobian is to be evaluated at $x = \Pi^{\dag}(x_r)$ in particular. In both cases, the important ability of the map $\Pi$ to capture strong nonlinearities will not be significantly diminished.

\subsection{Representation of the dynamics in RKHS}\label{sec:f_rkhs}
The vector-valued map $f:\RR^{n}\times\RR^m\to\RR^n$ can be approximated by  composing a set of $n$ regression functions (one for each coordinate) $\hat{f}_i:\RR^{q\times m}\to\RR$ in an RKHS using the representer theorem in Appendix A, with the reduction map $\Pi$. It is reasonable to expect that this approximation will be better than directly computing $f(\Pi^{\dag}(x_r),u)$ using, for instance, a Taylor expansion approximation for $\Pi^{\dag}$ which may ignore important nonlinearities at a stage where crude approximations must be avoided. Let us note here that, for this approximation part, we are using the representer theorem as described in Appendix A and that the RKHS we use is not necessarily the same as the one where we have been performing balancing as in the previous section.

Let $\tilde{x}=\Pi(x)$ denote a reduced state  variable, and concatenate the input examples $\tilde{x}_j:=\Pi(x_j)\in\RR^q,u_j\in\RR^m$ so that $z_j=(\tilde{x}_j,u_j)\in\RR^{q\times m}$, and $\{\left(f_i(x_j,u_j), z_j\right)\}_{j=1}^{\ell}$ is a set of input-output training pairs describing the $i$-th coordinate of the map $(\tilde{x},u)\mapsto f(x,u)$. The training examples should characterize
``typical'' behaviors of the system, and can even re-use those trajectories simulated in response to impulses for estimating the Gramians above. We will seek the function $\hat{f}_i\in\cH$ which minimizes
$$
\sum_{j=1}^{\ell}\bigl(\hat{f}_i(z_j) - f_i(x_j,u_j)\bigr)^2 + \lambda_i\|\hat{f}_i\|^2_{\cH}
$$
where $\lambda_i$ here is a regularization parameter. We have chosen the square loss, however other suitable loss functions may be used (cf. \cite{Wahba, pontil, cucker, schaback_survey}).
As mentioned in section \ref{sec:defins}, equation (\ref{learning1}),  $\hat{f}_i$ takes the form 
\[\label{f_hat} \hat{f}_i(z) =\hat{f}_i(\Pi(x),u)= \sum_{j=1}^{\ell}c_j^iK^f(z,z_j), i=1,\ldots,n,\] 
where $K^f$ defines the RKHS $\cH_f$ (and is unrelated to $K$ used to estimate the Gramians). Note that although our notation takes the RKHS for each coordinate function to be the same, in general this need not be true: different kernels may be chosen for each function. Here the $\{c_j^i\}|_{(i,j)=(1,1)}^{(i,j)=(n,\ell)}$ comprise a set of coefficients found using the regularized least squares (RLS) algorithm and satisfying the algebraic equation (\ref{learning2}) with ${\bf s}=\{\left(f_i(x_j,u_j), z_j\right)\}_{j=1}^{\ell}$  as training examples. The kernel family and any hyper-parameters can be chosen by cross-validation.
For notational convenience we will further define the vector-valued empirical feature map
\begin{equation*}
\bigl(\bk^{f}(\tilde{x},u)\bigr)_i:= K^f\bigl( (\tilde{x},u), z_i \bigr)
\end{equation*}
for $i=1,\ldots,\ell$. In this notation $\hat{f}_i\bigl(\Pi(x),u\bigr) = \bc_i^{\tr}\bk^{f}(\tilde{x},u)$
where $(\bc_i)_j = c_j^i$.
%

A broad class of systems seen in the literature~\cite{scherpen_thesis} are also characterized by separable dynamics of the form $\dot{x} = f(x) + \sum_{i=1}^mg_i(x)u_i$. In this case, one can
estimate the functions $f$ and $g_i$ from examples $\{(\Pi(x_j),f(x_j))\}_j$ and $\{(\Pi(x_j),g(x_j))\}_j$.

\subsection{Approximation of the Jacobian Contribution}\label{sec:jacobian_app}
We turn to approximating the component $J_{\Pi}\bigl(\Pi^{\dag}(x_r)\bigr)$ appearing in Equation~\eqref{eqn:exact_closed}.

\subsubsection{Inverse-Taylor Expansion}
A simple solution is to compute a low-order Taylor expansion of
$\Pi$ and then invert it using the Moore-Penrose pseudoinverse to obtain the approximation.
For example, consider the first order expansion $\Pi(x) \approx \Pi(a) + J_{\Pi}(a)(x-a)$. Then we
can approximate $\Pi^{\dag}(x_r)$ (in the first-order, least-norm sense) as
\[
\widehat{\Pi}^{\dag}(x_r) := \bigl(J_{\Pi}(a)\bigr)^{\dagger}(x_r - \Pi(a)) + a .
 \]
We may start with $a=x_0$, but periodically update the expansion in different regions of the dynamics if desired. A good expansion point could be the estimated preimage of $x_r(t)$ returned by the algorithm proposed in~\cite{Kwok:ICML:03}. If $\widetilde{\mathbf{k}}_o(x)$ is the centered version of the length $M$ vector $\mathbf{k}_o(x)$ defined by~\eqref{eqn:emp_kmap},  and since $\Pi(x)=T_q^T\widetilde{{\mathbf k}}_o(x)$, then
\[\label{j_pi_x}
J_{\Pi}(x) = \frac{\partial \Pi(x)}{\partial x} = T_q^{\tr}\bigl(I - \tfrac{1}{M}\bbone_M\bbone_M^{\tr}\bigr)\frac{\partial \mathbf{k}_o(x)}{\partial x}
\]
where $\bbone_M$ is the length $M$ vector of all ones.  $J_{\Pi}(x) $ is a matrix in $\RR^{q \times n}$ since $T_q=V_q \Sigma_q^{-\frac{1}{2}} \in \RR^{M \times q}$ with $V_q \in \RR^{M \times q}$, $K_{oc} \in  \RR^{Np \times Nm}$, $\Sigma_q \in \RR^{q \times q}$,  $(I-\frac{1}{M} \mathbf{1_M}  \mathbf{1_M}^T )\in  \RR^{M \times M}  $, $k_o(x) \in \RR^{M \times 1}$, $\frac{\partial k_o(x)}{\partial x} \in \RR^{M \times n} $.
 An example calculation of $\bigl(\partial_{x}\mathbf{k}_o(x)\bigr)_i = \partial_{x}K(x,d_i)$ in the case of a polynomial kernel
is given in the section immediately below.

\subsubsection{Exploiting Kernel Properties}
For certain choices of the kernel $K$ defining the Gramian feature space $\cH$, one can exploit
the fact that $K_x$ and its derivative bear a special relationship, and potentially improve the estimate for $J_{\Pi}(\Pi^{\dag}(x_r))$. Perhaps the most commonly used off-the-shelf kernel families are the polynomial and Gaussian families. For any two kernels with hyperparameters $p$ and $q$ (respectively) in one of these classes, we have that $K_p = (K_q)^{p/q}$. We'll consider the polynomial kernel of degree $d$,
$K_d(x,y):=(1+\scal{x}{y})^d$ in particular; the Gaussian case can be derived using similar reasoning. For a polynomial kernel we have that
$$
\pfrac{K_d(x,y)}{x} = dK_{d-1}(x,y)y^{\tr} = d\bigl(K_d(x,y)\bigr)^{\tfrac{d-1}{d}}y^{\tr}.
$$
Recalling that $K_d(x,y)=\scal{\Phi(x)}{\Phi(y)}_{\cH}$ and $x_r=\Pi(x)$ given by (\ref{eqn:pi_map}), if $\Pi$ were invertible then we would have
$$
\left.\pfrac{K_d(x,y)}{x}\right|_{x=\Pi^{-1}(x_r)} =
d\scal{(\Phi\circ\Pi^{-1})(x_r)}{\Phi(y)}^{\tfrac{d-1}{d}}y^{\tr}.
$$
The map $\Pi$ is not injective however, and in addition the fibers of $\Phi$ may be potentially empty, so we must settle for an approximation. It is reasonable then to {\em define}
$(\Phi\circ\Pi^{\dag})(x_r)$ as the solution to the convex optimization problem
\begin{equation}\label{eqn:inv-min}
\begin{aligned}
& \underset{z\in\cH}{\min} & & \nor{z}_{\cH} \\
& \text{subj. to} & & \nor{M_qz - x_r}_{\RR^k} = 0
\end{aligned}
\end{equation}
where $M_q:\cH\to\RR^k$ is defined as in Equation~\eqref{eqn:feat_space_proj}.
If a point $z\in\cH$ has a pre-image in $\RR^n$ this definition is consistent with composing
$\Phi$ with the formal definition $\Phi^{-1}(z) = \{x\in\RR^n~|~\Phi(x)=z\}$ and noting that in this case
$\Pi\circ\Phi^{-1} = M_q(\Phi\circ\Phi^{-1}) =M_qz$. Furthermore, a trajectory $x_r(t)$
of the closed dynamical system on the reduced statespace need not (and may not) have a counterpart in the
original statespace by virtue of the way in which $\Pi^{\dag}$ is used in our formulation of the reduction map and corresponding reduced dynamical system.

One will recognize that the solution $z^*$ to~\eqref{eqn:inv-min} is just the Moore-Penrose pseudoinverse $z^* = M_q^{\dagger}x_r$. Inserting this solution into the feature map representation of a kernel $K$ gives the following definition for $K(\Pi^{\dag}(x_r),y)$:
\begin{align*}
K(\Pi^{\dag}(x_r),y) &= \scal{(\Phi\circ\Pi^{\dag})(x_r)}{\Phi(y)}_{\cH} \\
 &= \scal{M_q^{\dagger}x_r}{\Phi(y)}_{\cH}  = \bigl\langle x_r, (M_q^{\tr})^{\dagger}\Phi(y)\bigr\rangle_{\RR^k} \\
 & = \bigl\langle x_r,(M_qM_q^{\tr})^{-1}M_q\Phi(y)\bigr\rangle \\
 & = \bigl\langle x_r,(M_qM_q^{\tr})^{-1}\Pi(y)\bigr\rangle \\
 & = \bigl\langle x_r,(T_q^{\tr}K_oT_q)^{-1}\Pi(y)\bigr\rangle \,
 \end{align*}
where the final equality follows applying Equations~\eqref{eqn:firstSVD}-\eqref{eqn:thSVD} and
$T_q$ is defined as in Theorem~\ref{thm:redmap_thm}.
Substituting into the derivative for a polynomial kernel $K=K_d$ gives
$$
\left.\pfrac{K_d(x,y)}{x}\right|_{x=\Pi^{\dag}(x_r)} =
d\bigl\langle x_r,(T_q^{\tr}K_oT_q)^{-1}\Pi(y)\bigr\rangle^{\tfrac{d-1}{d}}y^{\tr}
$$
which immediately gives an expression for $J_{\Pi}(\Pi^{\dag}(x_r))$ using (\ref{j_pi_x}):
\[\label{j_pi_x_poly}
 {J_{\Pi}(x)}|_{x=\Pi^{\dag}(x_r)}= T_q^{\tr}\bigl(I - \tfrac{1}{M}\bbone_M\bbone_M^{\tr}\bigr)
d\bigl\langle x_r,(T_q^{\tr}K_oT_q)^{-1}\Pi(d_i)\bigr\rangle^{\tfrac{d-1}{d}}d_i^{\tr}
\]

Note that this approximation is global in the sense that the $q\times q$ matrix inverse  $(T_q^{\tr}K_oT_q)^{-1}$ need only  be computed once\footnote{We use the word ``inverse'' loosely. In practice one would use a numerically stable method, such as an LU-factorization, which can be used to rapidly compute $A^{-1}b$ for fixed $A$ but many different $b$.}; no updating is required during simulation of the closed system.

\subsection{Reduced System Dynamics} \label{reduced_model}
Given an estimate $\hat{f}\bigl(\Pi(x),u\bigr)$ of $f(x,u)$ in the RKHS $\cH_f$ as in (\ref{f_hat}) and
a notion of $J_{\Pi}\bigl(\Pi^{\dag}(x_r)\bigr)$ from (\ref{j_pi_x}) and the section above, we can write down a closed dynamical system on the reduced statespace. We have
\begin{align}\label{eqn:approx_xr}
\dot{x}_r &\approx \left.\bigl(J_{\Pi}(x)\hat{f}(\Pi(x),u)\bigr)\right|_{x = \Pi^{\dag}(x_r)} \nonumber\\
&\approx \left.\bigl(J_{\Pi}(x)\bigr)\right|_{x = \Pi^{\dag}(x_r)}\bC^{\tr}\bk^{f}(x_r,u) \nonumber\\
&= T_q^{\tr}\bigl(I - \tfrac{1}{M}\bbone_M\bbone_M^{\tr}\bigr)\frac{\partial \mathbf{k}_o(x)}{\partial x}\bigg|_{x = \Pi^{\dag}(x_r)} \bC^{\tr}\bk^{f}(x_r,u)\nonumber\\
&:=T_q^{\tr}\bigl(I - \tfrac{1}{M}\bbone_M\bbone_M^{\tr}\bigr)J_{\mathbf{k}}\bigl(\Pi^{\dag}(x_r)\bigr) \bC^{\tr}\bk^{f}(x_r,u)
\end{align}
where $\bC$ is a matrix with the vectors $\bc_i$ as its rows, and $J_{\mathbf{k}}\bigl(\Pi^{\dag}(x_r)\bigr) :=\frac{\partial \mathbf{k}_o(x)}{\partial x}\big|_{x = \Pi^{\dag}(x_r)}$ is the Jacobian of the empirical feature map defined in Equation~\eqref{eqn:emp_kmap}. Here the expression 
$J_{\mathbf{k}}\bigl(\Pi^{\dag}(x_r)\bigr)$ should be interpreted as notation for either of the Jacobian approximations suggested in Section~\ref{sec:jacobian_app}.


Equation~\eqref{eqn:approx_xr} is seen to give a closed nonlinear control system
expressed solely in terms of the reduced variable \mbox{$x_r\in\RR^q$}:
\begin{equation}
 \label{reduced_system}
\left\{
\begin{aligned}
\dot{x}_r &=  {J_{\Pi}(x)}|_{x=\Pi^{\dag}(x_r)}\bC^{\tr}\bk^{f}(x_r,u)\\
\hat{y} &= \hat{h}(x_r)
\end{aligned}
\right.
\end{equation}
where the map $\hat{h}\circ\Pi$ modeling the output function $h:\RR^n\to\RR^p$ is estimated as described immediately below.
Although the ``true'' reduced system does not actually exist due to non-surjectivity of the
feature map $\Phi$, in many situations one can expect that the above system will capture the essential
input-output behavior of the original system. We leave a precise analysis of the error in the approximations
appearing in~\eqref{eqn:approx_xr} to future work\footnote{There are different sources of error in this process of approximation. A first one due to truncation in the RKHS which could be characterized by extending  (\ref{error_y_yr}) using the eigenvalues of  the Hankel kernel matrix (\ref{Hankel_kernel_matrix}). There is also the error in finding the pseudo inverse of $\Phi$ in (\ref{eqn:inv-min}), i.e. that it is tempting to write $||y-\hat{y}|| \le 2 \bigg(\sum_{j=q+1}^{Np} \sigma_j \bigg) ||u|| $ modulo the fact that $\Phi$ may not be surjective and that $\hat{y}$ in (\ref{reduced_system}) is not what we will end up observing as output for the reduced order system in $\RR^q$ and the fact that there is also the error in approximating $f(x,u)|_{x=\Pi^{\dag}(x_r)}$ by  $\bC^{\tr}\bk^{f}(x_r,u)$ that might be computed using results from \cite{smale_approximation_error}  for a given $u$ as illustrated in section  \ref{error_estimates} below.}.

\subsection{Outputs of the Reduced System} \label{output}
Analogous to the case of the dynamics $f$, we are faced with two possibilities for approximating
\[\label{output_map} y = h\bigl(\Pi^{\dag}(x_r)\bigr).\] We can apply a crude Taylor series approximation to estimate $\Pi^{\dag}$ and therefore $h\bigl(\Pi^{\dag}(x_r)\bigr)$, or as in Section~\ref{sec:f_rkhs}, we can estimate a map \[(\hat{h}\circ\Pi):\begin{array}{rcl}\RR^n&\to&\RR^p,\\~x_r&\mapsto& y \end{array}\] from the reduced state space to the output space directly, using RKHS methods. Given samples ${\bf s}=\{\Pi(x_j),y_j\}_{j=1}^{\ell}$, each coordinate function $\bigl(\hat{h}_i\bigr)_{i=1}^p$ is given in the familiar form  (\ref{learning1}) \[\hat{h}_i(\Pi(x)) = \sum_{j=1}^{\ell} b_j^iK^h\bigl(\Pi(x),\Pi(x_j)\bigr),\]
where $K^h$ is the kernel chosen to define the RKHS, and may be different for each coordinate and the $b_j$ satisfy the algebraic set of equations (\ref{learning2}). 

It should be noted that just given the state space reduction map $\Pi$, one can immediately compare the output of the system defined by $\hat{h}(x_r)$ to the original system without defining a closed dynamics as above. In fact with $\Pi$ and $\hat{h}$ one can design a simpler controller which takes as input the reduced state variable $x_r$, but controls the original system.

\subsection{On Error Estimates}\label{error_estimates}

The problems of approximating the functions $f$ and $h$ from time series  can be viewed as learning problem as described in  Appendix A and a theoretical justification of our algorithm is guaranteed by the error
estimates in Theorem \ref{thm:errors}. In fact, for the  dynamical
system $x(k+1)=f(x(k))$ (resp. the control system $x(k+1)=f(x(k),u(k))$), we have that $f^{\ast}$ in (\ref{unknown_func}) is the
map $f^{\ast}(x)=f_{i}(x)$ (resp. $f^{\ast}(z)=f_{i}(x,u)$ with $z=(x,u)'$)  and the samples
$\mathbf{s}$ in (\ref{samples}) are $(x(k),x_{i}(k+1)+\eta_{i})$ (resp.  $(\left(\begin{array}{c}x(k)\\u(k)\end{array}\right),x_{i}(k+1)+\eta_{i})$).

 Here $f_i$ is the unknown map $x(k) \rightarrow x_{i}(k+1) $ (resp. $\left(\begin{array}{c}x(k)\\u(k)\end{array}\right) \rightarrow x_{i}(k+1) $ )
 and it plays the role of the unknown function (\ref{unknown_func}).
  
  The initial
condition $x(0)$  (resp. the input) is known and $\eta_{i}$ are distributed according to a
probability measure $\rho_{x}$ that satisfies the following condition (this is
the \emph{Special Assumption} in \cite{smale_shannon1}).

\textbf{Assumption } The measure $\rho_{x}$ is the marginal on $X=\mathbb{R}%
^{n}$ of a Borel measure $\rho$ on $X\times\mathbb{R}$ with zero mean
supported on $[-M_{x},M_{x}],M_{x}>0$.

 Let's note that in \cite{smale_shannon1}, the authors do not consider time series and that we apply their results to time series. 
In the case of learning the map (\ref{output_map}) $\hat{h}\circ \Pi: \RR^n \rightarrow \RR^p$, we are looking at  learning the map $x_r \mapsto y$ from the samples ${\bf s}=\{\Pi(x_j),y_j\}_{j=1}^{\ell}$ while in the case of learning the vector-valued map $f:\RR^{n}\times\RR^m\to\RR^n$, this corresponds to learning the $i$-th coordinate of the map $(\tilde{x},u)\mapsto f(x,u)$ for $i=1,\cdots,n$.


In the case of the dynamics $x(k+1)=f^{\ast}(x(k))$ (resp. the control system $x(k+1)=f^{\ast}(x(k),u(k))$),   the error estimate (\ref{err_est1}) has the form
\begin{equation}
||\hat{x}_{i}(k+1)-x_{i}(k+1)||^{2}\leq2C_{\bar{x}}\mathcal{E}_{\mbox{samp}}%
+2||x(k+1)||_{K}^{2}(\gamma+8C_{\bar{x}}\Delta), \label{10}%
\end{equation}
where $||x_{i}(k+1)||_{\mathcal{H}_{K}}=\sum_{j=1}^{\infty}\frac{c_{i,j}^{2}%
}{\lambda_{j}}$.  

In the case of the unknown map (\ref{output_map}) $y = h\bigl(\Pi^{\dag}(x_r)\bigr)=\tilde{h}(x_r)$,   the error estimate (\ref{err_est1}) has the form
\begin{equation}
||\hat{y}-y||^{2}\leq2C_{\bar{x}}\mathcal{E}_{\mbox{samp}}
+2||\tilde{h}||_{K}^{2}(\gamma+8C_{\bar{x}}\Delta), \label{11}%
\end{equation}
where $||\tilde{h}||_{\mathcal{H}_{K}}=\sum_{j=1}^{\infty}\frac{\tilde{c}_{i,j}^{2}
}{\tilde{\lambda}_{j}}$.

The first term in the right hand side of inequalities (\ref{10})-(\ref{11})  represents the
error due to the noise (sampling error) and the second term represents the
error due to regularization (regularization error) and the finite-number of
samples (integration error).

\subsection{Algorithm Summary}
To summarize, the approach we have proposed proceeds as follows
 \begin{enumerate}
 \item Given a nonlinear control system (\ref{sigma}), let $u^i(t) = \delta(t)e_i$ be the $i$-th excitation signal for $i=1,\ldots,m$, and let $x^i(t):t\in[0,\infty)\mapsto x^i(t)\in\RR^n$ be the corresponding response of the system. Run the system and sample the trajectories at times $\{t_j\}_{j=1}^N$ to generate a collection of $N\cdot m$ vectors $\{x^i(t_j)\in\RR^n\}$.

\item Fixing $u(t) = 0$ and setting $x_0 = e_i$ for $i=1,\ldots,n$ (separately), measure the corresponding system output responses $y^i(t):t\in[0,\infty)\mapsto y^i(t)\in\RR^p$. As before, sample the responses at times $\{t_j\}_{j=1}^N$ and save the collection of $N\cdot p$ vectors $\{d_k(t_i)\}$ defined as
\[ d_k(t_j) = \bigl(y_k^1(t_j), \ldots, y_k^n(t_j)\bigr)^{\!\tr} \in\RR^n,\quad k=1,\ldots,p, \,\,j=1,\ldots,N\]

\item Choose a kernel $K$ defining a RKHS $\cH$, and form the Hankel kernel matrix 
$K_{o,c}\in\RR^{Np\times Nm}$,
\[(K_{o,c})_{\mu\nu} = K(d_\mu, x_\nu) \quad \mu=1,\ldots,Np, \,\, \nu=1,\ldots,Nm\]
where we have re-indexed the sets $\{d_k(t_i)\} = \{d_\mu\}, \{x^i(t_j)\} = \{x_\nu\}$
to use single indices.

\item Compute the eigendecomposition $K_{o,c}K_{o,c}^{\tr}=V\Sigma^2V^{\tr}$ assuming $K_{o,c}$ has been centered according to Equation~\eqref{eqn:koc_centering}.

\item The order of the model is reduced by discarding small eigenvalues $\{\Sigma_{ii}\}_{i=q+1}^n$, and projecting onto the subspace associated with the first $q<n$ largest eigenvalues. This leads to the state-space reduction map 
\[ \begin{array}{rcl}
\Pi:\RR^n&\rightarrow&\RR^q\\
    x  &\mapsto& x_r=\Pi(x) = T_q^{\tr}\mathbf{k}_o(x) 
\end{array}
\]
where $T_q = V_q\Sigma^{-1/2}_q$ and
$\mathbf{k}_o(x)$ is the centered empirical observability feature map given by Equation~\eqref{eqn:ko_centering}.

\begin{align} 
\widetilde{\mathbf{k}}_o(x) &= \bigl(\bPsi - \mu_o\bbone^{\tr}_M\bigr)^{\tr}\bigl(\Phi(x) - \mu_c\bigr) \nonumber\\
&= \mathbf{k}_o(x) - \tfrac{1}{N}K_{o,c}\bbone_N - \tfrac{1}{M}\bbone_M\bbone^{\tr}_M\mathbf{k}_o(x) + 
\tfrac{1}{NM}\bbone_M\bbone_M^{\tr}K_{o,c}\bbone_N.
\end{align}
and \[k_o(x)=\Psi^T \Phi(x)=(K(x,d_1),\cdots,K(x,d_M))^T \]

\item From input/output pairs or simulated/measured trajectories, learn approximations of the dynamics and output function defined on the reduced state space using, for instance, the representation theorem  in section \S \ref{sec:defins} (equations (\ref{learning1})-(\ref{learning2}). The RKHS used to approximate these functions need not be the same as the RKHS in which balanced truncation was carried out. 

For the approximation of the dynamics, the representer theorem will be applied to learn the
$i-th$ coordinate of the map $(\Pi(x),u) \mapsto f(x,u)$
using the samples ${\bf s}=(f_i(x_j,u_j),(\Pi(x_j),u_j))|_{j=1}^{\ell}$.

 For the approximation of the output map, the representer theorem will be applied to learn the map $x_r=\Pi(x) \mapsto y$ using the samples ${\bf s}=(\Pi(x_j),y_j)|_{j=1}^{\ell}$.

\item Approximate the Jacobian contribution as described in section \S \ref{sec:jacobian_app}. If the chosen kernels are polynomials, directly use (\ref{j_pi_x_poly}).
\item Combine the approximations to determine an expression for a closed, reduced, nonlinear dynamical system (\ref{reduced_system}) as described in sections \S. \ref{reduced_model} and  \S. \ref{output}.
\end{enumerate}


\section{Experiments}\label{sec:expts}

We demonstrate an application of our method on two examples appearing in~\cite{nilsson} (Examples 3.1. and 3.2, pgs. 52-54). 
\subsection{Two-Dimensional Exactly Reducible System}
Consider the nonlinear system
\[\begin{array}{rcl}
 \dot{x}_1&=&-3x_1^3+x_1^2x_2+2x_1x_2^2-x_2^3\\
 \dot{x}_2&=&2x_1^3-10x_1^2x_2+10x_1x_2^2-3x_2^3 - u\\
y&=&2x_1-x_2 \,.
\end{array}
\]
It can be shown that this system has the same input-output relationship as the system
$\dot{y}=-y^3+u$ by rearranging terms so that
\begin{eqnarray*}
 \dot{x}_1&=&-(2x_1-x_2)^2x_1+(x_1-x_2)^3\\
 \dot{x}_2&=& -(2x_1-x_2)^2x_2+2(x_1-x_2)^3-u\\
y&=&2x_1-x_2 \,.
\end{eqnarray*}
Defining the new variables $z_1=2x_1-x_2$ and $z_2=x_1-x_2$, the system can then be re-written
\[\begin{array}{rcl}
 \dot{z}_1&=&-z_1^3+u\\
 \dot{z}_2&=& -z_1^2z_2-z_2^3 + u\\
y&=&z_1 \,.
\end{array}
\]
It can be seen that the variable $z_2$ may be truncated because it doesn't appear in the expression of the output and thus doesn't affect $z_1$. Let's note here that the change of variables $x \mapsto z$ is a linear one and, importantly, that the method of nonlinear balancing as developed by Scherpen doesn't apply for this example since the system is not linearly controllable and the jacobian of the linearization has zero eigenvalues.

\subsection{Seven-Dimensional System}
We will also consider a 7-dimensional nonlinear system with one dimensional input and
output:
\[
\begin{array}{rcl}
\dot{x}_1 &=& -x_1^3 + u \\ 
\dot{x}_2 &=& -x_2^3 -x_1^2x_2 +3x_1x_2^2 -u \\
\dot{x}_3 &=& -x_3^3 + x_5 + u \\
 \dot{x}_4 &=& -x_4^3 + x_1 -x_2 +x_3 +2u \\
\dot{x}_5 &=& x_1x_2x_3 -x_5^3 + u  \\
 \dot{x}_6 &=& x_5 - x_6^3 -x_5^3 +2u\\
\dot{x}_7 &=& -2x_6^3 +2x_5 -x_7 -x_5^3 +4u \end{array}
\]
with $y = x_1 - x_2^2 + x_3 +x_4x_3 + x_5 -2x_6 +2x_7$.

Here also the method of nonlinear balancing as developed by Scherpen doesn't apply for this example since the system is not linearly controllable and the jacobian of the linearization has zero eigenvalues.

%
%
\subsection{Experimental Setup}
For both systems impulse and initial-condition responses of the system were simulated as described above, and 800 samples equally spaced in the time interval $[0,5s]$ were sampled to build the Hankel kernel matrix $K_{o,c}$ given by the third degree polynomial kernel $K(x,y)=(1 + \scal{x}{y})^3$. 
For the 2-D system we retained one component, and for the 7-D system we retained two for the sake of variety. Thus the reduction map $\Pi$ was defined by taking the top one or two eigenvectors (scaled columns of $T$) corresponding to the largest Hankel singular values, giving a reduced state space of dimension one or two for the 2-D and 7-D systems, respectively.

Next, a map from the reduced variable $x_r$ to $\dot{x}$ was estimated following Section~\ref{sec:f_rkhs}. The same procedure was followed in both experiments. The control input was chosen to be a 10hz square wave with peaks at $\pm 1$ at 50\% duty cycle, and 1000 samples from the simulated system in the interval $[0,5s]$ were mapped down using $\Pi$ and then used to solve the RLS regression problems, one for each state variable, again using a third degree polynomial kernel. All initial conditions were set to zero. The desired outputs (dependent variable examples) used to learn $\hat{f}$ were taken to be the true function $f$ evaluated at the samples from the simulated state trajectory. We also added a bias dimension of 1's to the data to account for an offset, and used a fast leave-one-out cross-validation (LOOCV) computation~\cite{RifRLS} to select the optimal regularization parameter. 

We followed a similar process to learn the output function $y = \hat{h}(x_r)$ for both systems. Here we used a 10Hz square wave control input (peaks at $\pm 2$, 50\% duty cycle), zero initial conditions and 700 samples in the interval $[0,5s]$. For this function the Gaussian kernel $K(x,y) = \exp(-\gamma\|x-y\|_2^2)$ was used to demonstrate that our method does not rely on any particular match between the form of the dynamics and the type of kernel. The scale hyperparameter $\gamma$ was chosen to be the reciprocal of the average squared-distance between the training examples. We again used LOOCV to select the RLS regularization parameter.

Finally, the comparisons between the exact and approximate reduced order systems were done using $x_0=0$ and a control input different from those used to learn the dynamics and output functions: $u(t) = \tfrac{1}{4}\bigl(\sin(2\pi 3t) + \text{sq}(2\pi 5t -\pi/2)\bigr)$ where $\text{sq}(\cdot)$ denotes the square wave function. 

The Taylor series approximation for $\Pi$ was done once, about $x_0$, and was not updated further. 

\subsection{Results}

For the 2D example, there is a clear gap in the singular values of $K_{oc}^TK_{oc}$, $\sigma_1(K_{oc}^TK_{oc})=89.3419$, $\sigma_2(K_{oc}^TK_{oc})=0.2574$ while the other ones are negligible.  For the 7D example, there is also a clear gap in the singular values of $K_{oc}^TK_{oc}$, $\sigma_1(K_{oc}^TK_{oc})=197.7821$, $\sigma_2(K_{oc}^TK_{oc})=46.9314$,  $\sigma_3(K_{oc}^TK_{oc})=2.7132$, $\sigma_4(K_{oc}^TK_{oc})=0.3293$, $\sigma_5(K_{oc}^TK_{oc})=0.0718$,  $\sigma_6(K_{oc}^TK_{oc})=0.0028$, $\sigma_7(K_{oc}^TK_{oc})=0.0010$, $\sigma_6(K_{oc}^TK_{oc})=0.0001$ while the other ones are negligible. We plot the first 100 singular values of the Hankel kernel matrix for both examples in  logarithmic scale in figure~\ref{fig:hankel_values_2d}.

The simulated outputs $\hat{y}(t)$ of the closed reduced systems as well as the output $y(t)$ of the original system together with a comparison with Lall et al. are plotted in Figures \ref{fig:sims2d} and \ref{fig:sims7d}, respectively. One can see that, even for a significantly different input, the reduced systems closely capture the original systems. The main source of error is seen to be over- and under-shoot near the square wave transients. This error can be further reduced by simulating the system for different sorts of inputs (and/or frequencies) and including the collected samples in the training sets used to learn $\Pi, \hat{f}$ and $\hat{h}$. Indeed, we have had some success driving example systems with random uniform input in some cases. Finally, we note that our method is as good as Lall et al.'s method especially that we assumed that the dynamics (\ref{eqn:nlsys}) is unknown and the simulation results are obtained using (\ref{reduced_system}) while in Lall et al. the dynamics is assumed to be known.

\begin{figure*}[t]
\centering
\includegraphics[height=8cm, width=15cm]{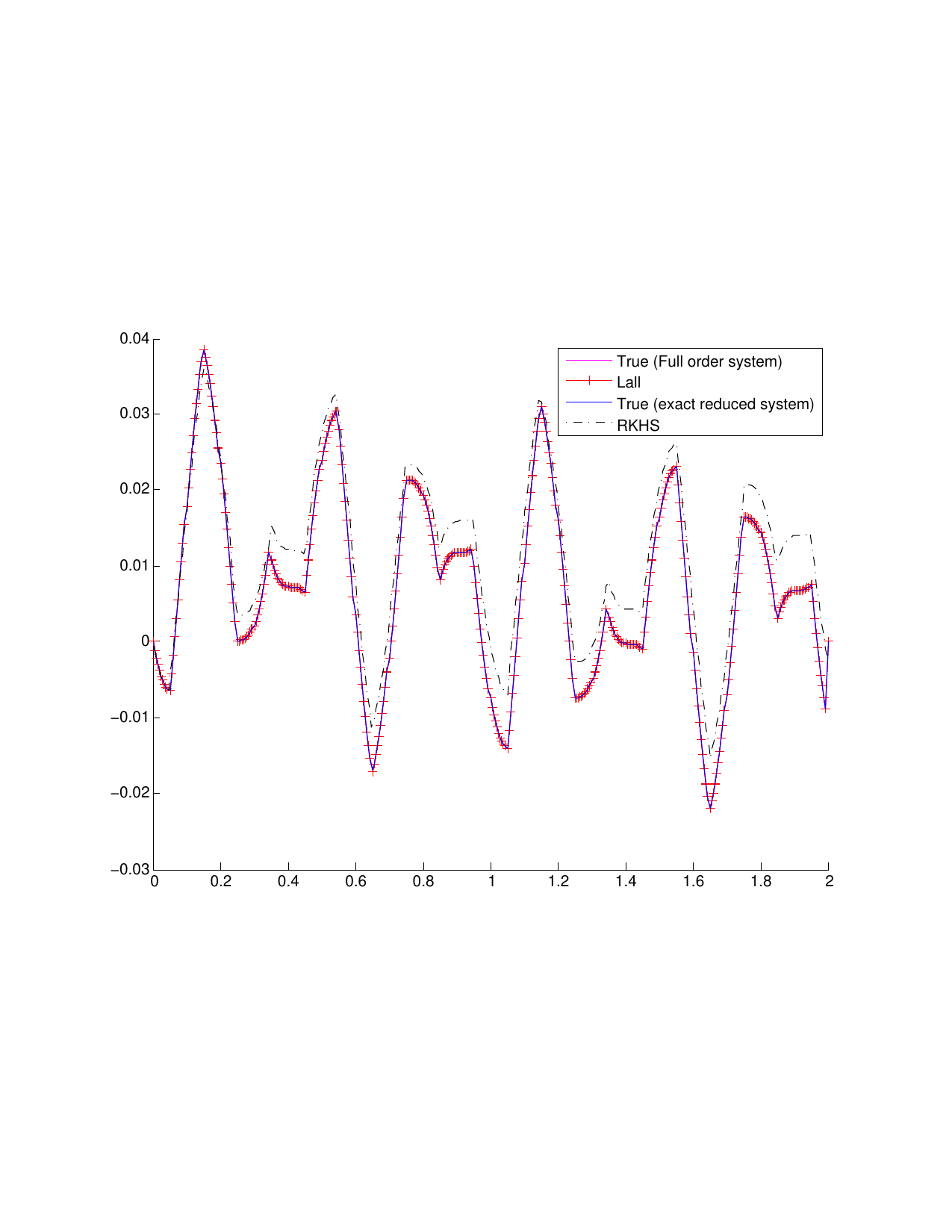}
\caption{{\em Simulated output trajectories for the original and reduced 2-dimensional system.}}
\label{fig:sims2d}
\end{figure*}

\begin{figure*}[t]
\centering
 
\includegraphics[height=8cm, width=15cm]{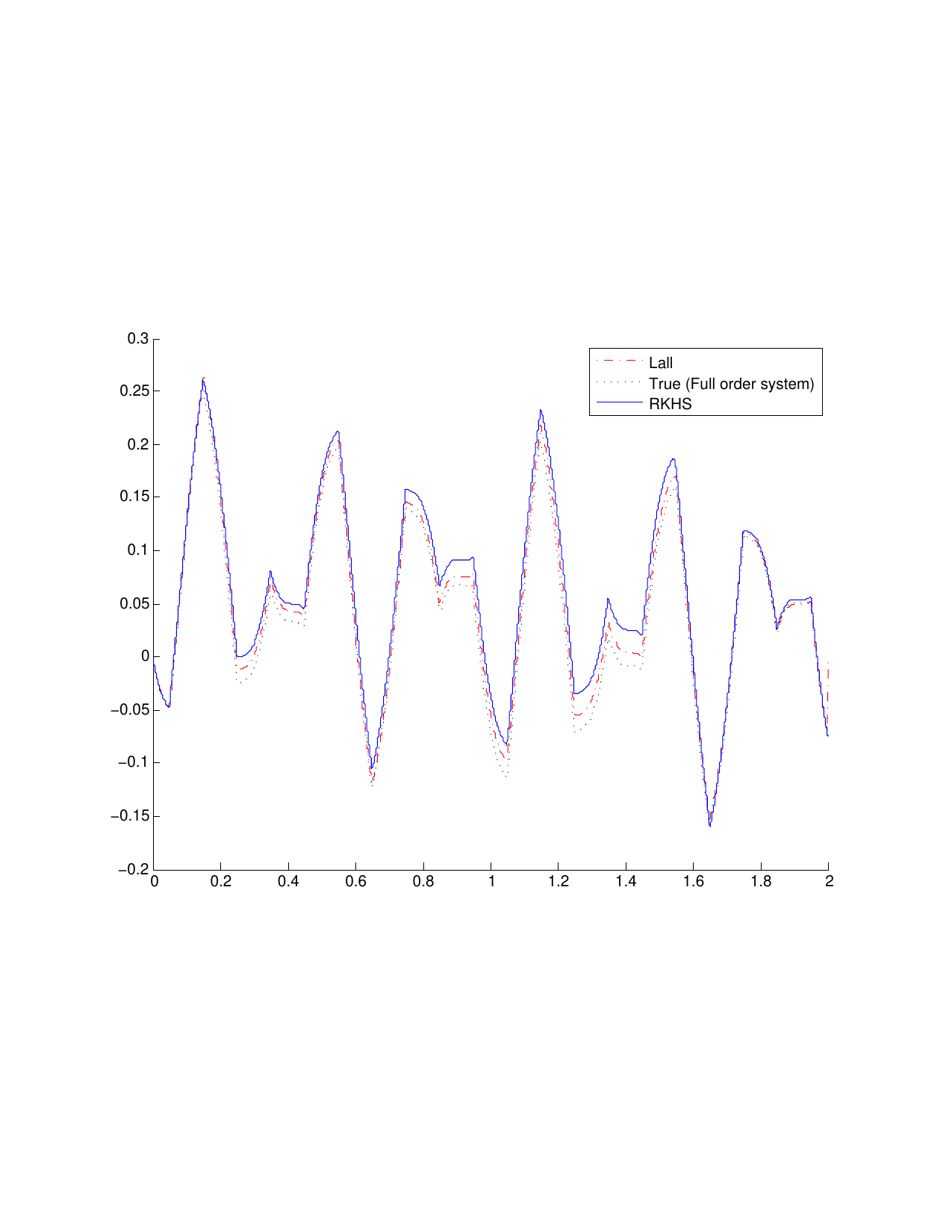}
\caption{{\em Simulated output trajectories for the original and reduced 7-dimensional system.}}
\label{fig:sims7d}
\end{figure*}


\begin{figure*}[t]
\centering
\includegraphics[height=8cm, width=15cm]{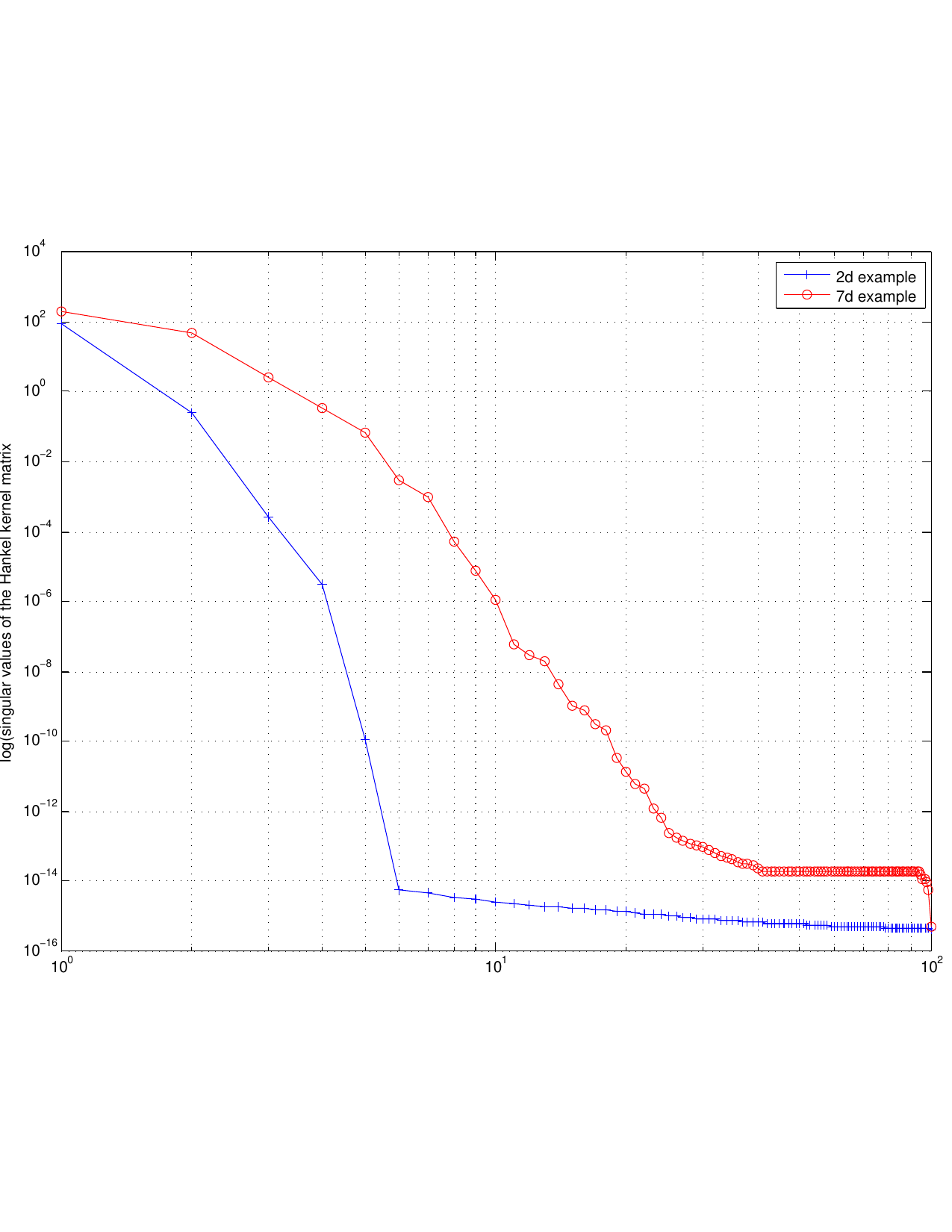}
\caption{{\em First hundred singular values of the Hankel kernel matrix, in logarithmic scale, for the 2D and 7D examples.}}
\label{fig:hankel_values_2d}
\end{figure*}


\section{Conclusion}
We have introduced a new, empirical model reduction method for nonlinear control systems. The method
assumes that the method of linear balancing applies to nonlinear systems in a high dimensional feature space. This leads to a nonlinear reduction map, which we suggest can be
combined with representations of the dynamics and output functions by elements of an RKHS to give a closed
reduced order dynamical system which captures the input-output characteristics of the original system. We then
demonstrated an application of our technique to a pair of nonlinear systems and simulated the original and
reduced models for comparison, showing that the approach proposed here can yield good low-order nonlinear
reductions of strongly nonlinear control systems. 
We believe that techniques well known to the machine
learning and statistics communities can offer much to control and dynamical systems research, and many further
directions remain, including computing error estimates, reduction of unstable systems, structure preserving
systems, stochastic differential equations (SDEs), and finding easily verifiable conditions of model reducibility of
nonlinear systems. For instance, we conjecture that a nonlinear system is model reducible if there exists an RKHS where there is a spectral gap in the gramians. Finally, our results allows us to argue that working in RKHSes  allows to develop methods  for a Data-Based Theory for (Nonlinear) Dynamical Systems.

\section{Acknowledgements}

BH thanks the European Commission and the Scientific and the Technological
Research Council of Turkey (Tubitak) for financial support received through a
Marie Curie Fellowship.
\appendix 

\section{Elements of Learning Theory} \label{sec:defins}



In this section, we give a brief overview of reproducing kernel Hilbert spaces as used in statistical learning
theory.
 The discussion here borrows heavily from~\cite{cucker,Wahba, smola_book}. Early work developing the theory of RKHS was
undertaken by I.J. Schoenberg \cite{schoenberg, schoenberg1, schoenberg2} and then N. Aronszajn~\cite{AronRKHS}. Historically, RKHSes came from the question: when is it possible to embed a metric space into a Hilbert space ?\footnote{A quasi-metric space $(X,d)$ is embeddable in a Hilbert space ${\cal H}$ if there exists a mapping $\Phi: X \rightarrow {\cal H}$ such that $d(x,y)=||\Phi(x)-\Phi(y)||$ for all $x$ and $y$ in X.  Schoenberg \cite{schoenberg, schoenberg1, schoenberg2} proved  that a finite metric space $(X,d)$ whose points are $x_0\cdots,x_n $ can be embedded into a Hilbert space  if and only if  the matrix $A$, whose entries are $A_{ij}=d(x_i,x_0)^2+d(x_j,x_0)^2-d(x_i,x_j)^2$, is nonnegative definite, i.e. $A$ is a Gram matrix \cite{cheney}.}

\begin{definition} Let  ${\cal H}$  be a Hilbert space of functions on a set ${\cal X}$. 
Denote by $\langle f, g \rangle$ the inner product on ${\cal H}$   and let $||f||= \langle f, f \rangle^{1/2}$
be the norm in ${\cal H}$, for $f$ and $g \in {\cal H}$. We say that ${\cal H}$ is a reproducing kernel
Hilbert space (RKHS) if there exists $K:{\cal X} \times {\cal X} \rightarrow \RR$
such that
\begin{itemize}
 \item[i.] $K$ has the reproducing property, i.e. $\forall f
\in {\cal H}$, $f(x)=\langle f(\cdot),K(\cdot,x) \rangle$.
\item[ii.] $K$ spans ${\cal H}$, i.e. ${\cal H}=\overline{\mbox{span}\{K(x,\cdot)|x \in {\cal X}\}}$.
\end{itemize}
$K$ will be called a reproducing kernel of ${\cal H}$. ${\cal H}_K(X)$  will denote the RKHS ${\cal H}$ 
with reproducing kernel  $K$.
\end{definition}

\begin{definition}
Given a kernel $K$  and inputs $x_1,\cdots,x_n \in {\cal X}$, the $n \times n$ matrix
\[k := (K(x_i,x_j))_{ij},\]
is called the \emph{Gram Matrix} of $k$ with respect to $x_1,\cdots,x_n$.  The kernel $K:{\cal X} \times {\cal X} \rightarrow \RR$ for which for all $n \in \NN$ and distinct $x_i \in {\cal X}$ gives rise to a strictly positive definite Gram matrix is called a strictly positive definite kernel.
\end{definition}

\begin{definition} (Mercer kernel map)
A function $K:{\cal X} \times {\cal X} \rightarrow \RR$ is called a Mercer kernel if it is continuous, 
symmetric and positive definite.
\end{definition}

The important properties of reproducing kernels are summarized in the following proposition
\begin{proposition}\label{prop1} If $K$ is a reproducing kernel of a Hilbert space ${\cal H}$, then
\begin{itemize}
\item[i.] $K(x,y)$ is unique.
\item[ii.]  $\forall x,y \in {\cal X}$, $K(x,y)=K(y,x)$ (symmetry).
\item[iii.] $\sum_{i,j=1}^m\alpha_i\alpha_jK(x_i,x_j) \ge 0$ for $\alpha_i \in \RR$ and $x_i \in {\cal X} $ (positive definitness).
\item[iv.] $\langle K(x,\cdot),K(y,\cdot) \rangle_{\cal H}=K(x,y)$.
\item[v.]
 Let $c \ne 0$. The following kernels, defined on a compact domain $\cX \subset \RR^n$, are Mercer kernels:
$K(x,y)=x\cdot y^{\prime}$ (Linear), $K(x,y)=(1+x\cdot y^{\prime})^d, \quad d \in \NN$ (Polynomial),
 $K(x,y)=e^{-\frac{||x-y||^2}{\sigma^2}}, \quad \sigma >0$ (Gaussian) .
\end{itemize}
\end{proposition}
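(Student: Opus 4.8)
The plan is to derive parts (i)--(iv) directly from the reproducing property together with the Hilbert-space structure of $\mathcal{H}$, and to treat part (v) separately using the closure properties of the class of positive definite kernels.

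First I would establish (iv) and the symmetry (ii) together. Applying the reproducing property $f(x)=\langle f(\cdot),K(\cdot,x)\rangle$ to the particular function $f=K(y,\cdot)\in\mathcal{H}$ gives $K(y,x)=\langle K(y,\cdot),K(\cdot,x)\rangle$; since $\mathcal{H}$ is a real Hilbert space its inner product is symmetric, so the right-hand side also equals $\langle K(\cdot,x),K(y,\cdot)\rangle$, and running the same argument with the roles of $x$ and $y$ exchanged yields $K(x,y)=K(y,x)$, which is (ii); combined with this symmetry the displayed identity is exactly (iv). For uniqueness (i), suppose $K_1$ and $K_2$ are both reproducing kernels for $\mathcal{H}$; then for every $f\in\mathcal{H}$ and every $x\in\mathcal{X}$ we have $\langle f, K_1(\cdot,x)-K_2(\cdot,x)\rangle=f(x)-f(x)=0$, and choosing $f=K_1(\cdot,x)-K_2(\cdot,x)$ forces $\|K_1(\cdot,x)-K_2(\cdot,x)\|=0$, so $K_1(\cdot,x)=K_2(\cdot,x)$ in $\mathcal{H}$ for every $x$. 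For positive definiteness (iii) I would expand using (iv): $\sum_{i,j=1}^m\alpha_i\alpha_j K(x_i,x_j)=\sum_{i,j}\alpha_i\alpha_j\langle K(\cdot,x_i),K(\cdot,x_j)\rangle=\big\|\sum_{i=1}^m\alpha_i K(\cdot,x_i)\big\|^2\ge 0$.

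For (v), continuity and symmetry of each of the three kernels on a compact $\cX\subset\RR^n$ are immediate, so the work lies in positive definiteness. The linear kernel is handled exactly as in (iii): $\sum_{i,j}\alpha_i\alpha_j\,(x_i\cdot x_j)=\|\sum_i\alpha_i x_i\|^2\ge 0$. For the polynomial and Gaussian cases I would first record the standard closure facts about positive definite kernels on $\cX$: the class is closed under addition, under multiplication by a nonnegative constant, under the Schur (entrywise) product --- hence under taking integer powers $(x\cdot y)^k$ --- and under pointwise limits. Then $(1+x\cdot y)^d$ is a finite sum of Schur products of the positive definite kernels $1$ and $x\cdot y$, hence positive definite. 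For the Gaussian, write $e^{-\|x-y\|^2/\sigma^2}=e^{-\|x\|^2/\sigma^2}e^{-\|y\|^2/\sigma^2}e^{2(x\cdot y)/\sigma^2}$; the factor $e^{2(x\cdot y)/\sigma^2}$ is the pointwise limit of the partial sums $\sum_{k=0}^{N}\frac{(2/\sigma^2)^k}{k!}(x\cdot y)^k$ of positive definite kernels, hence positive definite, and multiplication by the rank-one factor $g(x)g(y)$ with $g(x)=e^{-\|x\|^2/\sigma^2}$ preserves positive definiteness because $\sum_{i,j}\alpha_i\alpha_j g(x_i)g(x_j)\widetilde{K}(x_i,x_j)=\sum_{i,j}(\alpha_i g(x_i))(\alpha_j g(x_j))\widetilde{K}(x_i,x_j)\ge 0$.

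The routine parts are (i)--(iv) and the linear kernel; the only place that needs genuine care is the positive definiteness of the Gaussian kernel, where one must justify passing to the limit in the exponential series and invoke the Schur product theorem. If one prefers to avoid the Schur product theorem, an alternative is to exhibit an explicit feature map into $\ell^2$ for each kernel (monomial features for the polynomial kernel, the usual Gaussian RKHS expansion for the Gaussian) and then apply (iii); I would mention this route but carry out the closure-property argument as the main line since it is shorter.
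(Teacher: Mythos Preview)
Your argument is correct and standard. Note, however, that the paper does not actually prove this proposition: it is stated without proof in the background section on learning theory, with a pointer to the references of Cucker--Smale and Wahba. So there is no ``paper's own proof'' to compare against; your write-up would in fact supply what the paper omits.

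One small notational point worth tightening in part (ii)/(iv): the reproducing property in the paper's Definition is stated with $K(\cdot,x)$, i.e.\ the function $t\mapsto K(t,x)$, whereas you apply it to $f=K(y,\cdot)$. The clean version is to take $f=K(\cdot,y)$, evaluate at $x$ to get $K(x,y)=\langle K(\cdot,y),K(\cdot,x)\rangle$, and then use symmetry of the inner product together with the same identity with $x,y$ swapped to obtain $K(x,y)=K(y,x)$. This is surely what you intend, but as written it silently identifies $K(y,\cdot)$ with $K(\cdot,y)$ before symmetry has been established. Everything else---uniqueness via orthogonality, positive definiteness as a squared norm, and the closure/Schur-product route for the polynomial and Gaussian kernels---is fine.
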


\begin{theorem} \label{thm1}
Let $K:{\cal X} \times {\cal X} \rightarrow \RR$ be a symmetric and positive definite function. Then, there exists a Hilbert space of functions ${\cal H}$ defined on ${\cal X}$   admitting $K$ as a reproducing Kernel. Moreover, there exists a function $\Phi: X \rightarrow \mathcal{H}$ such that\footnote{This decomposition shows that kernels can be viewed as generalized dot products. }
$$K(x,y)= \langle \Phi(x), \Phi(y) \rangle_{\cal H} \quad \mbox{for} \quad x,y \in {\cal  X}.$$
 $\Phi$ is called a feature map\footnote{The dimension of the RKHS can be infinite and corresponds to the dimension of the eigenspace of the integral operator $L_K: {\cal L}_{\nu}^2({\cal X}) \rightarrow {\cal C}({\cal X})$ defined as $(L_K f)(x)=\int K(x,t)f(t)d\nu(t)$ if $K$ is a Mercer kernel, for $f \in {\cal L}_{\nu}^2({\cal X})$ and $\nu$ is a Borel measure on ${\cal X}$.}.

Conversely, let  ${\cal H}$ be a Hilbert space of functions $f: {\cal X} \rightarrow \RR$, with ${\cal X}$ compact, satisfying
$\forall x \in {\cal X}, \exists \kappa_x>0, \quad \mbox{such that} \quad |f(x)| \le \kappa_x ||f||_{\cal H}$.
Then, ${\cal H}$ has a reproducing kernel $K$.
\end{theorem}

\begin{theorem} \label{thm3}
 Every sequence of functions $(f_n)_{n \ge 1}$ which converges strongly to a function $f$ in ${\cal H}_K(X)$, converges also in the pointwise sense, that is, $\lim_{n \rightarrow \infty}f_n(x)=f(x)$, for any point $x \in X$. Further, this convergence is uniform on every subset of $X$ on which $x \mapsto K(x,x)$ is bounded.
\end{theorem}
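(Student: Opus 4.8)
The plan is to exploit the reproducing property together with the Cauchy--Schwarz inequality; no machinery beyond Proposition~\ref{prop1} is needed. The central observation is that pointwise evaluation at any fixed $x \in X$ is realized by taking the inner product against $K(\cdot,x)$, and this functional is bounded with norm exactly $\sqrt{K(x,x)}$.

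First I would fix an arbitrary point $x \in X$ and write, using property~i of the definition of an RKHS,
\[
f_n(x) - f(x) = \langle f_n - f, K(\cdot,x)\rangle_{\cal H}.
\]
Applying Cauchy--Schwarz and then property~iv of Proposition~\ref{prop1}, which gives $\|K(\cdot,x)\|_{\cal H} = \langle K(\cdot,x),K(\cdot,x)\rangle_{\cal H}^{1/2} = K(x,x)^{1/2}$, I obtain the basic estimate
\[
|f_n(x) - f(x)| \;\le\; \|f_n - f\|_{\cal H}\,\sqrt{K(x,x)}.
\]
Since $(f_n)$ converges strongly to $f$, the factor $\|f_n - f\|_{\cal H}$ tends to $0$, and as $\sqrt{K(x,x)}$ is a fixed finite constant for the given $x$, this proves $f_n(x) \to f(x)$, i.e.\ pointwise convergence.

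For the uniformity claim, I would let $S \subseteq X$ be any subset on which $x \mapsto K(x,x)$ is bounded, say $K(x,x) \le M$ for all $x \in S$. Then the same estimate yields, uniformly over $x \in S$,
\[
\sup_{x \in S} |f_n(x) - f(x)| \;\le\; \sqrt{M}\,\|f_n - f\|_{\cal H} \;\longrightarrow\; 0,
\]
which is precisely uniform convergence on $S$. I do not anticipate a genuine obstacle here: the only subtlety worth stating carefully is the identification $\|K(\cdot,x)\|_{\cal H}^2 = K(x,x)$, and that is already recorded as property~iv of Proposition~\ref{prop1}. The argument is otherwise a direct application of boundedness of the evaluation functionals, which is the defining feature of an RKHS.
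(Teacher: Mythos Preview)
Your proof is correct and is exactly the standard argument for this classical RKHS fact. The paper itself states Theorem~\ref{thm3} as a background result without proof (it is part of the preliminary overview in \S\ref{sec:defins}, borrowed from references such as \cite{cucker,AronRKHS,Wahba}), so there is no alternative argument in the paper to compare against; your use of the reproducing property together with Cauchy--Schwarz and the identity $\|K(\cdot,x)\|_{\cal H}^2 = K(x,x)$ is the canonical route.
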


\begin{remarks}
\begin{itemize}
\item[i.] In theorem \ref{thm1}, and using property [iv.] in Proposition \ref{prop1}, we can take $\Phi(x):=K_x:=K(x,\cdot)$ in which case $\mathcal{F}=\cH$ -- the ``feature space'' is the RKHS. This is called the \emph{canonical feature map}.
\item[ii.] The fact that Mercer kernels are positive definite and symmetric reminds us of similar properties of Gramians and covariance matrices. This is an essential fact that we are going to use in the following.
\item[iii.]  In practice, we choose a Mercer kernel, such as the ones in [v.] in Proposition \ref{prop1},  and theorem \ref{thm1} guarantees the existence of a Hilbert space admitting such a function as a reproducing kernel.

\item[iv.] Working in RKHSes allows to find nonlinear version of algorithms expressed in terms of inner products \cite{smola_book}. In fact, if an algorithm contains the quantity $\langle x, x^{\prime} \rangle$ then a nonlinear version of it, i.e. when $x$ is replaced by $\phi(x)$ with $\phi:\RR^n \rightarrow {\cal H}$, would contain the quantity $\langle \phi(x),\phi( x^{\prime}) \rangle$ where $\phi$ represents the nonlinearity. If we are working in an RKHS then $\langle \phi(x),\phi( x^{\prime}) \rangle := K(x,x^{\prime})$ and therefore we can replace all the quantities involving $\langle x, x^{\prime} \rangle$ in the original algorithm by $K(x,x^{\prime})$ in its nonlinear version.
\end{itemize}
\end{remarks}

\begin{example}
The following example is taken from \cite{berlinet}.  Let $V$ be the collection of functions $f$ with $f^{\prime \prime} \in L^2[0,1]$ and consider the subspace 
$$W_2^0=\{f(x) \in V: f, f^{\prime} \mbox{ absolutely continuous and } f(0)=f^{\prime}(0)=0 \}. $$
Define an inner product on $W_2^0$ as 
\[\label{eqn1} \langle f,g \rangle = \int_0^1 f^{\prime \prime}(t) g^{\prime \prime}(t) dt.  \]
Using integration by parts and the Fundamental Theorem of Calculus, it can be shown that for $f \in W_2^0$ and any $s \in [0,1]$, $f(s)$ can be written as
\[\label{eqn2} f(s)=\int_0^1 (s-u)_+ f^{\prime \prime}(u) du, \]
Since the reproducing kernel of the space $W_2^0$ must satisfy $f(s)=\langle f(\cdot), R(\cdot,s) \rangle$. From (\ref{eqn1}) and (\ref{eqn2}), we deduce that $K(\cdot,s)$ is a function such that 
$\frac{d^2 K(u,s)}{d^2 u}=(s-u)_+$.
Moreover, since $K(\cdot,s) \in W_2^0$ and using the property $K(s,t)=\langle K(\cdot,t), K(\cdot,s) \rangle$, we deduce that
\[K(s,t)=\langle K(\cdot,t), K(\cdot,s)\rangle=\int_0^1 (t-u)_+ (s-u)_+\, du=\frac{\mbox{max}(s,t)\, \mbox{min}^2(s,t)}{2}-\frac{\mbox{min}^3(s,t)}{6}  \]
\end{example} 

RKHS play an important role in learning theory whose objective is to find an
unknown function \[\label{unknown_func} f^{\ast}:X\rightarrow Y\] from random samples 

\[\label{samples1} {\bf s}=(x_{i},y_{i})|_{i=1}^{m},\]

In the following we review results from  \cite{smale_shannon1} (for a more general setting, cf. \cite{cucker}) about the special case when  the data samples ${\bf s}$ are such that 

\emph{Assumption 1:} The samples in (\ref{samples1}) have the special form
\[{\cal S: \quad}\label{samples}
{\bf s}=(x,y_x)|_{x \in \bar{x}},\]
where  $\bar{x}=\{x_i\}|_{i=1}^{d+1}$ and $y_x$ is drawn at random from $f^{\ast}(x)+\eta_x $, where $\eta_x$ is drawn from a probability measure $\rho_x$.

Here for each $x \in X$, $\rho_x$ is a probability measure with zero mean, and its variance $\sigma_x^2$ satisfies $\sigma^2 :=\sum_{x \in \bar{x}} \sigma_x^2 < \infty $. Let $X$ be a closed subset of $\RR^n$ and $\bar{t} \subset X$ is a discrete subset. Now, consider a kernel $K: X \times X \rightarrow \RR$ and define a matrix (possibly infinite) $K_{\bar{t},\bar{t}} : \ell^2(\bar{t}) \rightarrow \ell^2(\bar{t})$ as 
\[(K_{\bar{t},\bar{t}}a)_s = \sum_{t \in \bar{t}}K(s,t)a_t, \quad s \in \bar{t}, a \in \ell^2(\bar{t}), \]
where $\ell^2(\bar{t})$ is the set of sequences $a=(a_t)_{t \in \bar{t}}: \bar{t} \rightarrow \RR$ with $\langle a,b \rangle=\sum_{t \in \bar{t}}a_t b_t$ defining an inner product.  For example, we can take $X=\RR$ and $\bar{t}=\{0,1,\cdots,d\}$.

In the case of  dynamical systems such as the ones we are studying in this paper,  we are interested in learning the map ${\cal F}: x(k) \mapsto x(k+1)$ that characterises a dynamical system $x(k+1)={\cal F}(x(k))$ or  the map ${\cal F}: (x(k),u(k)) \mapsto x(k+1)$ that characterises a controlled dynamical system $x(k+1)={\cal F}(x(k),u(k))$. In order to get error estimates, we could easily apply the following results to the case of the dynamical systems we are interested in.

The case of approximating a function $f^{\ast} \in {\cal H}_K$ from samples of the form (\ref{samples1}) has been studied in \cite{smale_shannon1, smale_shannon2}. The problem we are interested in is the one of reconstructing $f^{\ast}$ from ${\bf s}$ which can be expressed as the minimisation problem
\[\label{reg_opt1}\bar{f}_{{\bf s},\gamma} :=\mbox{arg} {\mbox{min}}_{f \in {\cal H}_{K,\bar{t} } }\bigg\{\sum_{x \in \bar{x}}(f(x)-y_x)^2+\gamma ||f||_K^2\bigg\}, \]
where $\gamma \ge 0$. Moreover, in order to consider the case where $\bar{x}$ is not defined by a uniform grid on $X$, the authors of \cite{smale_shannon1} introduced a weighting $w := \{w_x\}_{x \in \bar{x}}$ on $\bar{x}$ with $w_x >0$\footnote{A suggestion proposed in \cite{smale_shannon1} is to consider the $\rho_X-$volume of the Voronoi associated with 
$\bar{x}$. Another example is $w=1$ or if $|\bar{x}|=m<\infty$, $w=\frac{1}{m}$.}.  Let $D_w$ be the diagonal matrix with ,aim diagonal entries $\{w_x\}_{x \in \bar{x}}$. Then, $||D_w|| \le ||w||_{\infty}$.

In this case, the regularisation scheme (\ref{reg_opt1}) becomes 
\[\label{reg_opt2}\bar{f}_{{\bf s},\gamma} :=\mbox{arg} {\mbox{min}}_{f \in {\cal H}_{K,\bar{t} } }\bigg\{\sum_{x \in \bar{x}}w_x(f(x)-y_x)^2+\gamma ||f||_K^2\bigg\}, \]

In learning theory, the minimization is taken over functions from a hypothesis space often taken to be a ball of a RKHS ${\cal H}_K$ associated to Mercer kernel $K$, and the function $f_{\bf s}$ that minimizes the empirical error ${\cal E}_{\bf s}$ is given by \emph {the representer theorem} (cf. \cite{Wahba, pontil, cucker} for derivation and more general forms of this theorem)

\begin{theorem} \label{thm_sol1}
Assume $f^{\ast} \in {\cal H}_{K, \bar{t}}$ and the standing hypotheses with $X$, $K$, $\bar{t}$, $\rho$ as above, $y$ as in (\ref{samples}). Suppose $K_{\bar{t},\bar{x}}D_wK_{\bar{x},\bar{t}} +\gamma K_{\bar{t},\bar{t}}$ is invertible. Define ${\cal L}$ to be the linear operator ${\cal L}=(K_{\bar{t},\bar{x}}D_wK_{\bar{x},\bar{t}}+\gamma K_{\bar{t},\bar{t}})^{-1}K_{\bar{t},\bar{x}}D_w$. Then the problem (\ref{reg_opt2}) has a unique solution 
\[\label{series1} f_{{\bf s},\gamma}=\sum_{t \in \bar{t}}({\cal L}y)_t K_t\]
\end{theorem}

The above theorem can be reformulated as follows

\begin{theorem}
Let ${\mathbf s} \in {\mathbb Z}^m$ and $\lambda \in \RR$, $\lambda >0$. The empirical target, i.e. the function $f_{\lambda,{\mathbf s}}=f_{\mathbf s}$ minimizing the regularized empirical error (\ref{reg_opt1})
over $f \in {\cal H}_K$, may be expressed as \[\label{learning1}f_{\bf s}(x)=\sum_{j=1}^mc_jK(x,x_j), \] where $c=(c_1,\cdots,c_m)$ is the unique solution of the well-posed linear system in $\RR^m$\[\label{learning2}\lambda \, m \, c_i+\sum_{j=1}^mK(x_i,x_j)c_j=y_i,\quad i=1,\cdots m, \]

\end{theorem}

\emph{Assumption 2}: For each $x \in X$, $\rho_x$ is a probability measure with zero mean supported on $[-M_x,M_x]$ with ${\cal B}_w :=(\sum_{x \in \bar{x}}w_x M_x^2)^{\frac{1}{2}} < \infty $.

\begin{definition} We say that $\bar{x}$ is $\Delta-$dense in $X$ if for each $y \in X$ there is some $x \in \bar{x}$ satisfying $||x-y||_{\ell^{\infty}(\RR^n)} \le \Delta$.
\end{definition}

\begin{theorem} (Sample Error) [Theorem 4, Propositions 2 and 3 in\cite{smale_shannon1}]

Suppose $K_{\bar{t},\bar{x}}D_wK_{\bar{x},\bar{t}} +\gamma K_{\bar{t},\bar{t}} $ is invertible.  Under the assumption (\ref{samples}), let $f_{{\bf s},\gamma}=\sum_{t \in \bar{t}}c_t K_t$ be the solution of (\ref{reg_opt2}) given in Theorem \ref{thm_sol1} by $c={\cal L}y$. Let ${\cal L}_w$ and $\kappa$ be
\[{\cal L}_w=(K_{\bar{t},\bar{x}}D_wK_{\bar{x},\bar{t}}+\gamma K_{\bar{t},\bar{t}})^{-1}K_{\bar{t},\bar{x}}D_w^{1/2} \]
 \[\kappa := ||K_{\bar{t},\bar{t}}|| \; ||(K_{\bar{t},\bar{x}}D_wK_{\bar{x},\bar{t}}+\gamma K_{\bar{t},\bar{t}})^{-1} ||^2\]

Then for every $0 < \delta <1$, with confidence $1-\delta$ we have the sample error estimate
\[\label{epsilon_samp} ||f_{{\bf s},\gamma}-f_{\bar{x},\gamma} ||_K^2 \le {\cal E}_{\mbox{samp}} := \kappa \sigma_w^2 \alpha^{-1} \bigg(\frac{2 ||K_{\bar{t},\bar{t}}{\cal L}_w|| \; ||{\cal L}_w|| \; {\cal B}_w^2 }{\kappa \sigma_w^2} \; \log{\frac{1}{\delta}}\bigg), \]
where $\alpha$ is the increasing function defined for $u >1$ as $\alpha(u)=(u-1) \log u$. In particular, $ {\cal E}_{\mbox{samp}} \rightarrow 0$ when $\gamma \rightarrow \infty$ or $\sigma^2_w \rightarrow 0$.

\end{theorem}

\begin{theorem} (Regularization Error and Integration Error, Proposition 4 and Theorem 5 in\cite{smale_shannon1})

Under Assumptions 1 and 2. Let $\bar{X}=(X_x)_{x \in \bar{x}}$ be the Voronoi of $X$ associated with $\bar{x}$ and $w_x=\rho_X(X_x)$.  Define the Lipschitz norm on a subset $X^{\prime} \subset X$ as $||f||_{\mbox{Lip}(X^{\prime})} := ||f||_{L^{\infty}(X^{\prime})}+\sup_{s,u \in X}\frac{|f(s)-f(u)|}{||s-u||_{\ell^{\infty}(\RR^n)}} $  and assume that the inclusion map of ${\cal H}_{K, \bar{t}}$ into the Lipschitz space satisfies\footnote{This assumption is true if $X$ is compact and the inclusion map of ${\cal H}_{K,\bar{t}}$ into the space of Lipschitz functions on $X$ is bounded which is the case when $K$ is a $C^2$ Mercer kernel \cite{zhou_capacity}. In fact, if $||f||_{\mbox{Lip}(X)} \le C_0 ||f||_K$ for each $f \in {\cal H}_{K,\bar{t}}$, then $C_{\bar{x}} \le C_0^2 \rho_X(X)$.}

\[C_{\bar{x}} := \sup_{f \in {\cal H}_{K, \bar{t}}}\frac{\sum_{x \in \bar{x}}w_x ||f||^2_{\mbox{Lip}(X_x)}}{||f||_K^2} < \infty. \]

\begin{itemize} 
\item[i.] If $f^{\ast} \in {\cal H}_{K,\bar{t}}$ and $\lambda_{\bar{x},w}>0$, then
\[||f_{\bar{x},\gamma}-f^{\ast} ||_K^2 \le \frac{\gamma ||K_{\bar{t},\bar{t}}|| \; ||f^{\ast}||_K^2}{\lambda_{\bar{x},w}^2} \] 
\item[ii.] If $\bar{x}$ is $\Delta-$dense, $C_{\bar{x}}< \infty$, and $f^{\ast} \in {\cal H}_{K,\bar{t}}$, then 
\[||f_{\bar{x},\gamma}-f^{\ast}||^2 \le ||f^{\ast}||_K^2 (\gamma+8 C_{\bar{x}}\Delta) \]

\end{itemize}
\end{theorem}

\begin{theorem} (Sample, Regularization and Integration Errors) \label{thm:errors} (Corollary 5 in \cite{smale_shannon1})

Under Assumptions 1 and 2. Let $\bar{X}=(X_x)_{x \in \bar{x}}$ be the Voronoi of $X$ associated with $\bar{x}$ and $w_x=\rho_X(X_x)$.  If $\bar{x}$ is $\Delta-$dense, $C_{\bar{x}}< \infty$, and $f^{\ast} \in {\cal H}_{K,\bar{t}}$, then, for every $0 < \delta < 1$, with probability at least  $1-\delta$ there holds 
\[\label{err_est1} ||f_{\bf{s},\gamma}-f^{\ast}||^2 \le 2C_{\bar x} {\cal E}_{\mbox{samp}}+2 ||f^{\ast}||_K^2 (\gamma+8 C_{\bar{x}}\Delta), \]
where ${\cal E}_{\mbox{samp}}$ is given in (\ref{epsilon_samp}).
\end{theorem}

 If $f^{\ast}$ is not an element of $ {\cal H}_{K,\bar{t}}$ then one also needs an estimate for the approximation error \cite{smale_approximation_error},\cite{cucker}. 

\begin{theorem}  Define $f_{\bf{s},\gamma}$ by (\ref{series1}). If $L_K^{-r}f_{\rho} \in L^2_{\rho_X}$, then 
\[||f_{\bf{s},\gamma}-f_{\rho}||_{L^2_{\rho_X}} \le  \lambda^r ||L_K^{-r}f_{\rho}||_{L^2_{\rho_X}},\quad \mbox{if} \quad 0 < r \le 1. \]
When $\frac{1}{2} < r \le 1$, we have 
\[||f_{\bf{s},\gamma}-f_{\rho}||_{K} \le \lambda^{r-\frac{1}{2}} ||L_K^{-r}f_{\rho}||_{L^2_{\rho_X}} \]

\end{theorem}

Here $f_{\bf s, \gamma}$ is taken as an approximation of the regression function $f_{\rho}$. Hence, minimizing over the (possibly infinite dimensional)
Hilbert space, reduces to minimizing over $\RR^m$.  The series (\ref{series1}) converges absolutely and uniformly to $f$.  We call \emph{learning} the process of approximating the unknown function $f$ from random samples on $Z$.



In the following, we assume that the kernels  $K$ are continuous and bounded by $\kappa = \sup_{x\in\cX}\sqrt{K(x,x)} < \infty $.

\section{Kernel PCA}
Kernel PCA~\cite{KPCA:98} will be a helpful starting point for understanding the approach to balanced reduction introduced in this paper. We briefly review the relevant background here.

Kernel PCA (KPCA) is a generalization of linear PCA that allows to take into account nonlinear versions of observations. This is done by carrying out PCA in a high dimensional  RKHS through
an injective, not necessarily surjective, map \[ \Phi: \RR^n \rightarrow {\cal H},  \; x \mapsto \Phi(x). \]  $\Phi$ is possibly nonlinear and ${\cal H}$ is a high-dimensional, possibly infinite-dimensional, RKHS.  Given the set of data $\mathbf{x}:=\{x_i\}_{i=1}^N \subset \RR^{n}$, the covariance matrix $C \in \RR^n \times \RR^n$ defined as 
\[\label{cov_mat} C=\frac{1}{N}\sum_{i=1}^N x_i x_i^T\] 
becomes a covariance matrix in ${\cal H}$
\[\label{eqn:kpca_cov} {\mathbf C_{\cal H}}=\frac{1}{N}\sum_{i=1}^N\Phi(x_i) \Phi(x_i)^T\]

If ${\cal H}$ is infinite-dimensional, we think of $\Phi(x_i) \Phi(x_i)^T$ as a linear operator on ${\cal H}$, mapping $x \mapsto \Phi(x_j) \langle \Phi(x_j),x\rangle$. We will assume the data are centered in the feature space so that $\sum_i \Phi(x_i) = 0$. If not, data may be centered according to the prescription in~\cite{KPCA:98}.

Taking the feature map\footnote{This feature map is valid given property iv. in Proposition A.3.} $\Phi: \RR^n \rightarrow \RR^N$ with  $\Phi_i(x)=K(x,x_i)$ and given the set of data $\mathbf{x}:=\{x_i\}_{i=1}^N \subset \RR^{n}$, we can consider PCA in $\RR^N$  by simply working with the covariance of the mapped vectors  (\ref{eqn:kpca_cov}).

The principal subspaces are computed by diagonalizing $C_{\cal H}$ through solving the eigenvalue problem
\[\label{ev_problem} \lambda v={\mathbf C}_{\cal H} v,\]
for eigenvalues $\lambda \ge 0$ and nonzero eigenvectors $v \in \RR^N$. This problem can be expressed in term of an inner-product by plugging (\ref{eqn:kpca_cov}) into (\ref{ev_problem}) and getting
\[\label{ev_problem1} \lambda v={\mathbf C}_{\cal H} v=\frac{1}{N}\sum_{i=1}^N\langle \Phi(x_i),v \rangle_{\cal H} \Phi(x_i),\]

However as is shown in~\cite{KPCA:98}, we can perform the computations directly in terms of the kernel without explicitly knowing $\Phi$.  One can equivalently form the matrix ${\mathbb K}$ of kernel
products whose entries are $({\mathbb K})_{ij} = K(x_i, x_j)$ for $i,j=1,\ldots,N$, and solve the eigenproblem 
\[\label{ev_problem_alpha} {\mathbb K}\boldsymbol{\alpha}=N\lambda\boldsymbol{\alpha},\]
 in $\RR^N$. Moreover the eigenvectors  $v$ in (\ref{ev_problem}) and the eigenvectors  $\boldsymbol{\alpha}$  in (\ref{ev_problem_alpha}) are related through  \[\label{v_and_alpha} v_i = \Psi\boldsymbol{\alpha}_i,\] where
$\Psi:=\bigl(\Phi(x_1)~\cdots~\Phi(x_M)\bigr)$, and the non-zero eigenvalues of ${\mathbb K}$ and $\mathbf C_{\cal H}$ coincide.

The eigenvectors $\boldsymbol{\alpha}_i$ of ${\mathbb K}$ are subsequently normalized so that the eigenvectors $v_i$ of $\mathbf C_{\cal H}$ have unit norm in the RKHS, leading to the condition $\nor{\boldsymbol{\alpha}_i}^2=\lambda_i^{-1}$. Assuming this normalization convention, sort the eigenvectors according to the magnitudes of the corresponding eigenvalues in descending order, and form the matrix \[A_q = \bigl[\boldsymbol{\alpha}_1 ~\cdots~ \boldsymbol{\alpha}_q\bigr], 1\leq q\leq \min(n,M).\]

Similarly, form the matrix $V_q = \bigl[v_1 ~\cdots~ v_q\bigr], 1\leq q\leq \min(n,M)$  of sorted eigenvectors of $C_{\cal H}$. The first $q$ principal components of a vector $x=\Phi(\tilde{x})$ in the feature space are then given by $V_q^{\tr}x$. It can be shown however (see~\cite{KPCA:98}) that principal components in the
feature space can be computed in the original space with kernels using the map $\Pi: \RR^M \rightarrow \RR^q$
 \[\label{map_kpca} \Pi(x) := A_q^{\tr}\mathbf{k}(x),\] where $\mathbf{k}(x) = \bigl(K(x,x_1),\ldots,K(x,x_M)\bigr)^{\tr}$.

Kernel methods can be used to develop nonlinear generalizations of any algorithm that can be expressed in terms of inner products \cite{KPCA:98} and KPCA is an illustration of this approach. KPCA is viewed as a nonlinear version of PCA since  PCA in $\RR^n$ can be reformulated as an eigenvalue problem in terms of inner products as in (\ref{ev_problem1}) but with $C_{\cal H}$ replaced by $C$ and $\Phi(x)=x$, i.e.
\[\label{ev_problem2}\lambda v = C v = \frac{1}{M} \sum_{i=1}^M \langle x_i, v \rangle x_i,\]
given the expression of $C$ in (\ref{cov_mat}).
If one wants to perform PCA on a nonlinear version of the data ${(x_i)}|_{i=1}^M$ through a nonlinear map $\Phi$, it is enough to replace $x$ by $\Phi(x)$ in the eigenvalue problem (\ref{ev_problem2}) to get (\ref{ev_problem1}).

Our  goal in this paper is to extend this method to linear balancing in view of applying it to nonlinear control systems.



\begin{thebibliography}{99}




\bibitem{antoulas} Antoulas, A. C. (2005). {\it Approximation of Large-Scale Dynamical Systems}, SIAM Publications.


\bibitem{archambeau} C Archambeau, D Cornford, M Opper, J Shawe-Taylor,  Gaussian process approximations of stochastic differential equations, Journal of machine learning research, pp. 1-16,	64,	2007.

\bibitem{AronRKHS} Aronszajn, N. (1950). {Theory of Reproducing Kernels}. {\it Trans. Amer. Math. Soc.}, 68:337-404.


\bibitem{berlinet} Berlinet, A.  and C. Thomas-Agnan (2004). {\it Reproducing Kernel Hilbert Spaces in Probability and
Statistics}, Kluwer Academic Publishers, Norwell, MA.

 \bibitem{allerton} Bouvrie, J.  and B. Hamzi (2010). Balanced Reduction of Nonlinear Control Systems in
Reproducing Kernel Hilbert Space, in {\it Proc. 48th Annual Allerton Conference on Communication, Control, and Computing}, pp. 294-301.
 \url{http://arxiv.org/abs/1011.2952}.
\bibitem{acc2012} Bouvrie, J.  and B. Hamzi (2012), Empirical Estimators for the Controllability Energy and
Invariant Measure of Stochastically Forced Nonlinear Systems,in {\it Proc. of the 2012 American Control Conference} (long version at
 \url{http://arxiv.org/abs/1204.0563}).

\bibitem{siam_jds} Bouvrie, J.  and B. Hamzi (2014), Embedology for Control and Random Dynamical Systems in Reproducing Kernel Hilbert Spaces, in preparation.

\bibitem{cheney} Cheney, W. and W. Light (2009). A Course in Approximation Theory, Graduate Studies in Mathematics, vol. 101, AMS.

\bibitem{Mauro08} Coifman, R. R., I. G. Kevrekidis, S. Lafon, M. Maggioni and B. Nadler (2008).
{Diffusion Maps, Reduction Coordinates, and Low Dimensional Representation of Stochastic Systems}, {\it
Multiscale Model. Simul.}, 7(2):842-864.

\bibitem{cucker} Cucker, F. and S. Smale (2001). On the mathematical foundations of learning. {\it Bulletin of AMS}, 39:1-49.

\bibitem{dullerud} Dullerud, G. E., and F. Paganini (2000). {\it A Course in Robust Control Theory: a Convex Approach}, Springer.

\bibitem{pontil} Evgeniou, T., M. Pontil, and T. Poggio (2000). 
Regularization networks and support vector machines,
Advances in Computational Mathematics, vol. 13, no. 1, pp. 1-50.


\bibitem{fujimoto} Fujimoto, K. and D. Tsubakino (2008). { Computation of nonlinear balanced realization and model reduction based on Taylor series expansion}, {\it Systems and Control Letters},
{\bf 57},  4, pp. 283-289.

\bibitem{fujimoto1} Fujimoto, K. and J. Scherpen (2010).  Balanced realization and model order reduction for nonlinear systems based on singular value analysis, SIAM Journal on Control and Optimization, Vol. 48, 7, pp. 180-194.

\bibitem{gray} Gray, W. S. and  E. I. Verriest (2006).  Algebraically Defined Gramians for Nonlinear Systems, {\it Proc. of the 45th IEEE CDC}.

\bibitem{hammarling} Hammarling, S. J.  (1982). Numerical Solution of the Stable, Non-negative Definite
Lyapunov Equation,  IMA Journal of Numerical Analysis, vol. 2, pp. 303-323.

 \bibitem{jolliffe} Jolliffe, I.T. (2002). Principal Component Analysis, Springer.

\bibitem{jonckheere} Jonckheere, E.A., and L. M. Silverman (1983). {\it A New Set of Invariants for Linear Systems - Application to Reduced Order Compensator Design}. {IEEE Transactions on Automatic Control}, {\bf AC-28}, 10,  953-964.

\bibitem{kantz}   Kantz, H. and T. Schreiber. Nonlinear Time Series Analysis, Cambridge University Press, 2004.


\bibitem{kenney} Kenney, C. and G. Hewer (1987). {\it Necessary and Sufficient Conditions for Balancing Unstable Systems}, IEEE Transactions on Automatic Control, {\bf 32}, 2, pp. 157-160.

\bibitem{krener} Krener, A. (2006). Model reduction for linear and nonlinear control systems. Bode Lecture, 45th IEEE Conference on Decision and Control.

\bibitem{krener2} Krener, A. J. (2007). The Important State Coordinates of a Nonlinear System. In {\it ``Advances in control theory and applications''}, C. Bonivento, A. Isidori, L. Marconi, C. Rossi, editors, pp. 161-170. Springer.
\bibitem{krener1} Krener, A. J. (2008). Reduced order modeling of nonlinear control systems. In  {\it ``Analysis and Design of Nonlinear Control Systems''}, A. Astolfi and
L. Marconi, editors, pp. 41-62. Springer.


\bibitem{Kwok:ICML:03} Kwok, J. T. and I.W.~Tsang (2003). ``The Pre-Image Problem in Kernel Methods''.
In {\it Proceedings of the Twentieth International Conference on Machine Learning (ICML)}.

 \bibitem{lall} Lall, S., J. Marsden, and S. Glavaski (2002). { A subspace approach to balanced truncation for model reduction of
 nonlinear control systems}, {\it International Journal on Robust and Nonlinear Control}, {\bf 12}, 5, pp. 519-535.

 \bibitem{laub} Laub, A.J. (1980). On Computing ``balancing'' transformations, {\it Proc. of the 1980 Joint Automatic Control Conference (ACC)}.

\bibitem{li} Li, J.-R. (2000). {\it Model Reduction of Large Linear Systems via Low Rank System Grammians}. Ph.D. thesis, Massachusetts Institute of Technology.

\bibitem{mika98} Mika, S., B. Sch\"{o}lkopf, A. Smola, K. R. M\"{u}ller, M. Scholz, and G. R\"{a}tsch (1998).  Kernel PCA and de-noising in feature spaces, In {\it Proc. Advances in Neural Information Processing Systems (NIPS) 11}, pp. 536--542, MIT Press.

\bibitem{moore} Moore, B. (1981). { Principal Component Analysis in Linear Systems: Controllability, Observability, and Model Reduction},
{\it  IEEE Tran. Automat. Control}, {\bf 26}, 1, pp. 17-32.

 \bibitem{newman} Newman, A.J., and P. S. Krishnaprasad (2000). Computing balanced realizations for nonlinear systems, {\it Proc. of the Math. Theory of Networks
 and Systems (MTNS)}.
\bibitem{nilsson} Nilsson, O. (2009). {\it On Modeling and Nonlinear Model Reduction in Automotive Systems}, Ph.D. thesis, Lund University.

\bibitem{Phillips} Phillips, J.,  J.~Afonso, A.~Oliveira and L. M. Silveira (2003). {Analog Macromodeling using Kernel Methods}. In {\it Proceedings of the IEEE/ACM International Conference on Computer-aided Design}.


\bibitem{RifRLS} Rifkin, R., and R.A. Lippert. {\it Notes on Regularized Least-Squares}, CBCL Paper 268/AI Technical Report 2007-019, Massachusetts Institute of Technology, Cambridge, MA, May, 2007.



\bibitem{schaback_survey} Schaback, R.  and H. Wendland. Kernel techniques: From machine learning to meshless methods, Acta Numerica, vol.15, pp. 543-639, 2006.

\bibitem{scherpen_balancing} Scherpen, J. (1993). Balancing for nonlinear systems. Syst. \& Contr. Let., 21, pp 143-153.

\bibitem{scherpen_thesis} Scherpen, J. (1994). {\it Balancing for Nonlinear Systems}, Ph.D. thesis, University of Twente, \url{http://www.dcsc.tudelft.nl/~jscherpen/thesis.html}.

\bibitem{scherpen_survey} Scherpen, J. (2011). Balanced Realizations, Model Order Reduction, and the Hankel Operator,  the Control Systems Handbook, 2nd ed., Advanced Methods, Eds. William Levine, CRC Press, Taylor \& Francis Group, Chap. 4, pp. 1-24.

\bibitem{schoenberg} Schoenberg, I. J. (1935). Remarks to Maurice Fr\'echet's article "Sur la d\'efinition axiomatique d'une classe d'espace distanci\'es vectoriellement applicable sur l'espace de Hilbert", Annals of Mathematics 36 (1935), 724--732.

\bibitem{schoenberg1} Schoenberg, I. J.  (1937) On certain metric spaces arising from euclidean spaces by a change of metric
and their imbedding in Hilbert space, Annals of Mathematics, (2), vol. 38 (1937), pp. 787-793.
\bibitem{schoenberg2} Schoenberg, I. J. (1938). Metric spaces and positive definite functions,  Trans. Amer. Math. Soc. 44 (1938), pp. 522-536.

\bibitem{KPCA:98}
Sch\"{o}lkopf, B., Smola, A., and M\"{u}ller, K (1998).
\newblock Nonlinear component analysis as a kernel eigenvalue problem.
\newblock \emph{Neural Computation}, 10\penalty0 (5):\penalty0 1299--1319.

\bibitem {smale_approximation_error} Smale, S. and D.,-X. Zhou (2003).
Estimating the Approximation Error in Learning Theory, Analysis and
Applications, Volume 01, Issue 01.

\bibitem {smale_shannon1}  Smale, S. and D.,-X. Zhou, Shannon sampling and function reconstruction from point values, Bull. Amer. Math. Soc. 41, 2004, pp. 279-305.

\bibitem {smale_shannon2}  Smale, S. and D.,-X. Zhou,  Shannon sampling II: Connections to learning theory, Applied and Computational Harmonic Analysis, Volume 19, Issue 3, November 2005, pp. 285-302.

\bibitem{hyperbolic} S.~Smale and D.-X.~Zhou, \textsl{Online Learning with Markov Sampling}, Anal. Appl., 7, pp. 87-113,  2009.


\bibitem{smola_book} Sch\"{o}lkopf, B.,  and A. J. Smola  (2002). Learning with Kernels, The MIT Press.





\bibitem{steinwart_svms}   Steinwart, I. and A. Christmann (2008). Support Vector Machines, Springer.

\bibitem{therapos} Therapos, C. P. (1989). {\it Balancing Transformations for Unstable Nonminimal Linear Systems}, IEEE Transactions on Automatic Control, {\bf 34}, 4, pp. 455-457.



\bibitem{verriest1} Verriest, E. (1981). Suboptimal LQG-Design via Balanced Realizations. Proc. of the 20th IEEE CDC, pp. 686-687.

\bibitem{verriest2} Verriest, E.  (1984). Approximation and Order Reduction in Nonlinear models using an RKHS Approach, Proc. of the 18th Annual Conference on Information Sciences and Systems, pp. 197-201.

\bibitem{box}  Box, G. E. P.  and G. M. Jenkins, Time Series Analysis: Forecasting and Control,  Wiley, 2008.

 \bibitem{ljung} L. Ljung, System Identification: Theory for the User, Prentice Hall,   1999. 
  

\bibitem{Wahba} Wahba, G. (1990). {Spline Models for Observational Data}, {\it SIAM CBMS-NSF Regional Conference Series in Applied Mathematics} 59.

\bibitem{weiland} Weiland, S. (1991). Theory of approximation and disturbance attenuation of linear systems, Doctoral dissertation, University of Groningen.

\bibitem{zhou_capacity} Zhou, D.-X. (2003). 
Capacity of reproducing kernel spaces in learning theory, IEEE Transactions on Information Theory,   Volume:49 ,  Issue: 7, pp. 1743 - 1752.















\end{thebibliography}
\end{document}